\newtheorem{thm}{Theorem}
\newtheorem{prop}[thm]{Proposition}
\newtheorem{lem}[thm]{Lemma}
\newtheorem{conj}[thm]{Conjecture}
\newtheorem{question}{Question}
\theoremstyle{definition}
\newtheorem{defn}[thm]{Definition}
\newtheorem*{rem}{Remark}
\newcommand{\exact}[1]{\textbf{\textcolor{red}{#1}}}
\newcommand{\mathexact}[1]{\textcolor{red}{#1}}
\newcommand{\fl}[1]{\lfloor#1\rfloor}
\newcommand{\cl}[1]{\lceil #1\rceil}
\newcommand{\Fl}[1]{\left\lfloor#1\right\rfloor}
\newcommand{\fr}[1]{\{#1\}}
\newcommand{\Fr}[1]{\left\{#1\right\}}
\newcommand{\NN}{\mathbb{N}}
\newcommand{\ZZ}{\mathbb{Z}}
\newcommand{\QQ}{\mathbb{Q}}
\newcommand{\lgt}{\log_{10}}
\begin{document}

\title{The Surprising Accuracy of Benford's Law in Mathematics}

\markright{The Surprising Accuracy of Benford's Law}

\author{Zhaodong Cai, Matthew Faust, A.~J. Hildebrand, Junxian Li,\\
and Yuan Zhang}

\maketitle

\begin{abstract}
Benford's law is an empirical ``law'' governing the frequency of leading
digits in numerical data sets.  Surprisingly, for mathematical sequences the
predictions derived from it can be uncannily accurate.  For example, among
the first billion powers of $2$, exactly $301029995$ begin with digit 1,
while the Benford prediction for this count is
$10^9\log_{10}2=301029995.66\dots$.  Similar ``perfect hits'' can be
observed in other instances, such as the digit $1$ and $2$ counts for the first
billion powers of $3$.  We prove results that explain many, but not all, of 
these surprising accuracies, and we relate the observed behavior to classical
results in Diophantine approximation as well as recent deep conjectures in
this area. 
\end{abstract}

\section{Introduction.}
\label{sec:intro}

\emph{Benford's law} is the empirical observation that leading
digits in many real-world data sets tend to follow the \emph{Benford
distribution}, depicted in Figure \ref{fig:benford-distribution} and
given by 
\begin{equation}
\label{eq:benford} 
P(\text{first digit is $d$}) 
=P(d)=
\log_{10}\left(1+\frac1d\right),\quad
d=1,2,\dots,9.  
\end{equation}

\begin{figure}[H]
\begin{center}
\includegraphics[width=.6\textwidth]{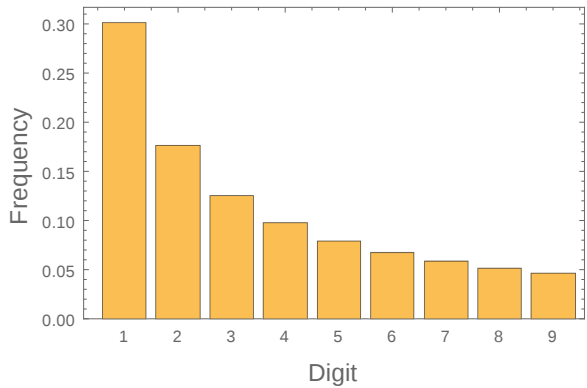}
\end{center}
\caption{The Benford distribution, $P(d)=\log_{10}(1+1/d)$.}
\label{fig:benford-distribution}
\end{figure}
Thus, in a data set following Benford's law, approximately 
$\log_{10}2\approx 30.1\%$ of the numbers begin with digit $1$, 
approximately $\log_{10}(3/2)\approx 17.6\%$ begin with digit $2$, 
while only around $\log_{10}(10/9)\approx 4.6\%$ begin with digit $9$.

Benford's law has been found to be a good match for a wide range of real
world data, from populations of cities to accounting data,
and it has been the subject of nearly one thousand articles
(see the online bibliography \cite{benfordonline}),  including several 
\textsc{Monthly} articles (e.g., \cite{hill1995,raimi1976,
ross2011}).  It has also long been known (see, e.g., 
\cite{diaconis1977}) that Benford's law holds for many ``natural''
mathematical sequences with sufficiently fast rate of growth, such as the
Fibonacci numbers, the powers of $2$, and the sequence of factorials.  In
this context, saying that Benford's law holds is usually
understood to mean that, for each digit $d\in\{1,2,\dots,9\}$, the
proportion of terms beginning with digit $d$ among the first $N$ terms of
the sequence converges to the Benford frequency $P(d)$ given by
\eqref{eq:benford}, as $N\to\infty$.

\subsection{How accurate is Benford's law?} Given a sequence such as the
powers of $2$, Benford's law predicts that, among
the first $N$ terms of the sequence, approximately $N\log_{10}(1+1/d)$
begin  with digit $d$, for each $d\in\{1,2,\dots,9\}$.  How good are these
approximations?  A  natural benchmark is a random model: Imagine the
sequence of leading digits were generated randomly by repeated throws of
a 9-sided die with faces marked $1,2,\dots,9$, weighted such that face $d$
comes up with the Benford probability $P(d)=\log_{10}(1+1/d)$.  Under these
assumptions, by the central limit theorem the difference between the actual
and predicted digit counts among the first $N$ terms will be roughly of
order $\sqrt{N}$. Thus, in a data set consisting of a billion terms 
(i.e., with $N=10^9$) it would be reasonable to expect errors on the order
of $10,000$. 

Random models of the above type form the basis of numerous conjectures in number
theory, most notably the Riemann hypothesis.  However, there also exist
problems in which, due to additional structure inherent in the problem,  it
is reasonable to expect smaller errors than the squareroot type errors
that are typical for random situations.  Two classic examples of this type are the
circle problem of Gauss and the divisor problem of Dirichlet, which have
been the subject of a recent \textsc{Monthly} article 
\cite{berndt2018}.  In both of these problems the ``correct''
order of the error terms is believed to be $N^{1/4}$. For $N=10^9$, this
would suggest errors on the order of $100$. 

Finally, there are examples in number theory in which the approximation
error, while still exhibiting ``random'' behavior, grows at a logarithmic
rate. One such case is a problem investigated by Hardy and Littlewood 
\cite{hardy-littlewood1921} concerning the number of lattice points in a 
right triangle.

How good are the predictions provided by Benford's law when compared to
such benchmarks?  The surprising answer is that, in many cases, these
predictions  appear to be uncannily accurate---more accurate than 
any of the above benchmarks, and more accurate than even the most
optimistic conjectures would lead one to expect. 
In fact, when we first observed some remarkable coincidences in data we had
compiled for a different project
\cite{mersenne-benford}, 
we thought of 
them as mere flukes.  Later we revisited the problem, approaching it in a
systematic manner, expecting to either confirm the ``fluke'' nature of
these coincidences, or to come up with a simple explanation for them. 

What we found instead was something far more complex, and more interesting,
than any of us had anticipated. Our attempt at getting to the bottom of
some seemingly insignificant numerical coincidences turned into a research
adventure full of surprises and unexpected twists that required unearthing
little-known classical results in Diophantine approximation as well as
drawing on some of the deepest recent work in the area.  In this article we
take the reader along for the ride in this  adventure in mathematical research
and discovery, and we describe the results that came out of this work.

\subsection{Outline of the article.}
The rest of this article is organized as follows.  In Sections
\ref{sec:data1}--\ref{sec:data2} we present the numerical data
alluded to above, we formalize several notions of ``surprising''
accuracy, and we pose three questions suggested by
the numerical observations that will serve as guideposts for our
investigations.  The remainder of the article is devoted to unraveling the
mysteries behind the numerical observations and uncovering,
to the extent possible, the underlying general phenomena. 
We proceed in  three stages, corresponding to three different levels of
sophistication in terms of the mathematical tools used.  The three stages
are largely independent of each other, and they can be read independently.

In the first stage, consisting of Sections \ref{sec:mystery1} and
\ref{sec:mystery2}, we use an entirely elementary approach to settle the
mystery in a particularly interesting special case.  In the second stage,
presented in 
Sections \ref{sec:benford-interval-discrepancy}--\ref{sec:bounded-error},
we draw on results by Ostrowski and Kesten from the mid 20th
century to obtain a general solution to the mystery in the ``bounded
Benford error'' case.  In the third stage, contained in Section
\ref{sec:unbounded-error}, we bring recent groundbreaking and deep
work of J\'{o}zsef Beck to bear on  the remaining---and most
difficult---case, that of an ``unbounded Benford error,'' and we present 
the surprising denouement of the mystery in this case. 

The final section, Section \ref{sec:concluding-remarks}, contains some
concluding remarks on extensions and generalizations of these results
and related
results.

\section{Numerical Evidence: Exhibit A.}
\label{sec:data1}

We begin by presenting some of the numerical data that had spurred our
initial investigations.  Our data consisted of leading digit counts for
the first billion terms of a variety of ``natural'' mathematical
sequences.  Carrying out such large scale computations is a highly
nontrivial task that, among other things, required the use of specialized
C++ libraries for arbitrary precision real number arithmetic. 
The technical details are described in \cite{mersenne-benford}.

Table \ref{table:benford-1bil} shows the actual leading digit counts for
the sequences $\{2^n\}$, $\{3^n\}$, and $\{5^n\}$,
along with the predictions provided by
Benford's law, i.e., $N\log_{10}(1+1/d)$, where $N=10^9$. 

%%%%%%%%%%%%%%%%%%%%%%%%%%%%%%%
% table of {a^n} with bounded Benford errors
%%%%%%%%%%%%%%%%%%%%%%%%%%%%%%%%

\begin{table}[H]
\caption{%
Predicted versus actual counts of leading digits among the first billion
terms of the sequences $\{2^n\}$, $\{3^n\}$, $\{5^n\}$. 
Entries in \exact{boldface} fall within $\pm1$ of the predicted counts.
}
\label{table:benford-1bil}
\begin{center}
\begin{tabular}{|c||c|c|c|c|}
\hline
Digit & Benford Prediction & $\{2^n\}$ & $\{3^n\}$ &  $\{5^n\}$ 
\\
\hline
\hline
1 & 301029995.66 & \exact{301029995} & \exact{301029995} &
  \exact{301029995}
\\
\hline
2 & 176091259.06 & 176091267 & \exact{176091259} & 176091252
\\
\hline
3 & 124938736.61 & 124938729 & \exact{124938737} & 124938744
\\
\hline
4 & 96910013.01 & \exact{96910014} & 96910012 & \exact{96910013}
\\
\hline
5 & 79181246.05 & 79181253 & \exact{79181247} & 79181239
\\
\hline
6 & 66946789.63 & 66946788 & 66946787 & 66946793
\\
\hline
7 & 57991946.98 & 57991941 & 57991952 & 57991951
\\
\hline
8 & 51152522.45 & 51152528 & 51152520 & 51152519
\\
\hline
9 & 45757490.56 & 45757485 & \exact{45757491} & 45757494
\\
\hline
\end{tabular}
\end{center}
\end{table}
Remarkably, nine out of the $27$ entries in this table 
fall within $\pm1$ of the Benford predictions and are equal to  
the floor or the ceiling of the predicted values.
This is an amazingly good
``hit rate'' for numbers that are on the order of $10^8$.  Of the
remaining $18$ entries, all are within a single digit error of the
predicted value.

As remarkable as these observed coincidences seem to be, one has be careful
before jumping to conclusions.  For example, a ``perfect hit'' observed at
$N=10^9$  might just be a coincidence that does not persist at other values
of $N$.  Such coincidences would not be particularly unusual in case the
errors in the Benford approximations have a slow (e.g., logarithmic) rate
of growth.  

One must also keep in mind Guy's ``strong law of small numbers''
\cite{guy}, which refers to situations in which the ``true'' behavior is
very different from the behavior that can be observed within the computable
range. Such situations are not uncommon in number theory; Guy's paper includes 
several examples.  Could it be that the uncanny accuracy of Benford's
law observed in Table \ref{table:benford-1bil} is just a manifestation of
Guy's ``strong law of small numbers,'' and thus a complete mirage?

%\section{Perfect Hits, Almost Perfect Hits, and Bounded Errors.}
\section{Perfect Hits and Bounded Errors.}
\label{sec:perfect-hits}

Motivated by the observations in Table \ref{table:benford-1bil}, we now
formalize several notions of ``surprising'' accuracy of Benford's law.

We begin by introducing some basic notations. We denote by $D(x)$ the
\emph{leading} (i.e., \emph{most significant}) digit of a positive number
$x$, expressed in its standard decimal expansion and ignoring leading
$0$'s; for example, $D(\pi)=D(3.141\dots)=3$ and $D(1/6)=D(0.166\dots)=1$.

We write $\fl{x}$ (respectively, $\cl{x}$) for the
\emph{floor} (respectively, \emph{ceiling})
of a real number $x$, and $\fr{x}=x-\fl{x}$  for its 
\emph{fractional part}.

Given a sequence $\{a_n\}$ of positive real numbers and
a digit $d\in\{1,2,\dots,9\}$, we define 
the associated \emph{leading digit counting function} as 
\begin{align*}
%\label{eq:def-Sd}
S_d(N,\{a_n\}) &=\#\{n\le N: D(a_n)=d\},
\end{align*}
where, here and in the sequel,
$N$ denotes a positive integer and the notation ``$n\le N$''
means that $n$ runs over the integers $n=1,2,\dots,N$.
We denote the \emph{Benford approximation}, or \emph{Benford prediction},
for the counting function $S_d(N,\{a_n\})$ by 
\begin{align*}
%\label{eq:def-Bd}
B_d(N) &=NP(d)=N\lgt\left(1+\frac1d\right).
\end{align*}

In terms of these notations, the entries in the second 
column of Table \ref{table:benford-1bil}
are $B_d(10^9)$, $d=1,2,\dots,9$, while those in the three right-most 
columns are $S_d(10^9,\{a^n\})$, $d=1,2,\dots,9$, for $a=2$, $a=3$, and
$a=5$.

% We formalize the concepts of a ``perfect hit''  and ``near misses''
% observed in this table as follows.

%\begin{defn}[Perfect Hits, Almost Perfect Hits, and Bounded Errors]
\begin{defn}[Perfect Hits and Bounded Errors]
\label{def:perfect-hits}
Let $\{a_n\}$ be a sequence of positive real numbers and let
$d\in\{1,2,\dots,9\}$.  
\begin{itemize}
\item[(i)]
We call the Benford prediction for the leading digit $d$ in the sequence
$\{a_n\}$ a \textbf{perfect hit} if it satisfies either
\begin{equation}
\label{eq:perfect-hit1}
S_d(N,\{a_n\})=\fl{B_d(N)}\quad\text{for all $N\in\NN$},
\end{equation}
or 
\begin{equation}
\label{eq:perfect-hit2}
S_d(N,\{a_n\})=\cl{B_d(N)}\quad\text{for all $N\in\NN$},
\end{equation}
i.e., if the actual leading digit count is \emph{always} equal to 
the predicted count rounded \emph{down} (respectively, \emph{up})
to the nearest integer. 
In the first case we call the Benford prediction 
a \textbf{lower perfect hit}, while in the second case 
we call it an \textbf{upper perfect hit}.

\item[(ii)]
We say that the Benford prediction
for the leading digit $d$ in the sequence $\{a_n\}$ 
has \textbf{bounded error} if there exists a constant $C$ such that
\begin{equation}
\label{eq:bounded-error}
|S_d(N,\{a_n\})-B_d(N)|\le C 
\quad\text{for all $N\in\NN$}.
\end{equation}

\end{itemize}
\end{defn}

%%%%%%%%%%%%%%%%%%%%%%%%%%%%%%%

\begin{rem}
Define the \emph{Benford error} 
as the difference between the actual and predicted leading digit counts: 
\begin{align}
E_d(N,\{a_n\}) &=S_d(N,\{a_n\})-B_d(N).
\label{eq:benford-error}
\end{align}
Then  the above definitions can  
be restated in terms of the Benford error as follows.
\begin{align}
\label{eq:perfect-hit1-criterion}
\text{lower perfect hit\ }
&\Longleftrightarrow 
-1< E_d(N,\{a_n\})\le 0\quad \text{for all $N\in\NN$,}
\\
\label{eq:perfect-hit2-criterion}
\text{upper perfect hit\ }
&\Longleftrightarrow
\ 0\le E_d(N,\{a_n\})< 1\quad \text{for all $N\in\NN$,}
\\
\text{bounded error\ }
&\Longleftrightarrow 
|E_d(N,\{a_n\})|\le C\quad \text{for some $C$ and all $N\in\NN$.}
\label{eq:bounded-error-criterion}
\end{align}
\end{rem}

%%%%%%%%%%%%%%%%%%%%%%%%%%%%%%%
%%%%%%%%%%%%%%%%%%%%%%%%%%%%%%%

As observed above, of the $27$ entries in  Table \ref{table:benford-1bil} 
nine are equal to the Benford prediction rounded up or down to an integer.
Hence, each of these cases represents a \emph{potential} 
perfect hit 
in the sense of Definition \ref{def:perfect-hits}.
This suggests the following questions:

\begin{question}[Perfect Hits]
\label{questionPH}
Which, if any, of the nine \emph{observed}  ``perfect hits'' 
 in Table \ref{table:benford-1bil}
 are ``for real,''  i.e., are instances of a true 
 perfect hit in the sense of Definition \ref{def:perfect-hits}? 
 \end{question}

\begin{question}[Bounded Errors]
\label{questionBE}
Which, if any, of the $27$ entries in Table \ref{table:benford-1bil}
represent cases in which the Benford prediction 
has bounded error?  
\end{question}

In this article we will provide a complete answer to these questions,
not only for the sequences shown in Table \ref{table:benford-1bil}, but for
arbitrary sequences of the form $\{a^n\}$.  We encourage the reader to
guess the answers to these questions before reading on. Suffice it to say
that our own initial guesses turned out to be way off!

%%%%%%%%%%%%%%%%%%%%%%%%%%%%%%%%%%%%%%%%%%%%%%%%%%%%%%%%%%%%%%%%%%%%%%%%%
%%%%%%%%%%%%%%%%%%%%%%%%%%%%%%%%%%%%%%%%%%%%%%%%%%%%%%%%%%%%%%%%%%%%%%%%%

\section{Numerical Evidence: Exhibit B.}
\label{sec:data2}

For further insight into the behavior of the Benford approximations,  
it is natural to consider the \emph{distribution} of the 
Benford errors  defined in \eqref{eq:benford-error} as $N$ varies.
Focusing on the sequence $\{2^n\}$, we have computed, for each digit
$d\in\{1,2,\dots,9\}$, the quantities $E_d(N; \{2^n\})$, $N=1,2,\dots,
10^9$,
and plotted a histogram of the distribution of these $10^9$ terms. 
The results, shown in Figure \ref{fig:histograms}, turned out to be quite
unexpected.

%%%%%%%%%%%%%%%%%%%%%%%%%%%%%%%%%%%%%%%%%%%%%%%%%%%%%%%%%%%%%%%%%%%%%%%%%
%% histograms
%%%%%%%%%%%%%%%%%%%%%%%%%%%%%%%%%%%%%%%%%%%%%%%%%%%%%%%%%%%%%%%%%%%%%%%%%
\begin{figure}[H]
\begin{center}
\includegraphics[width=0.31\textwidth]{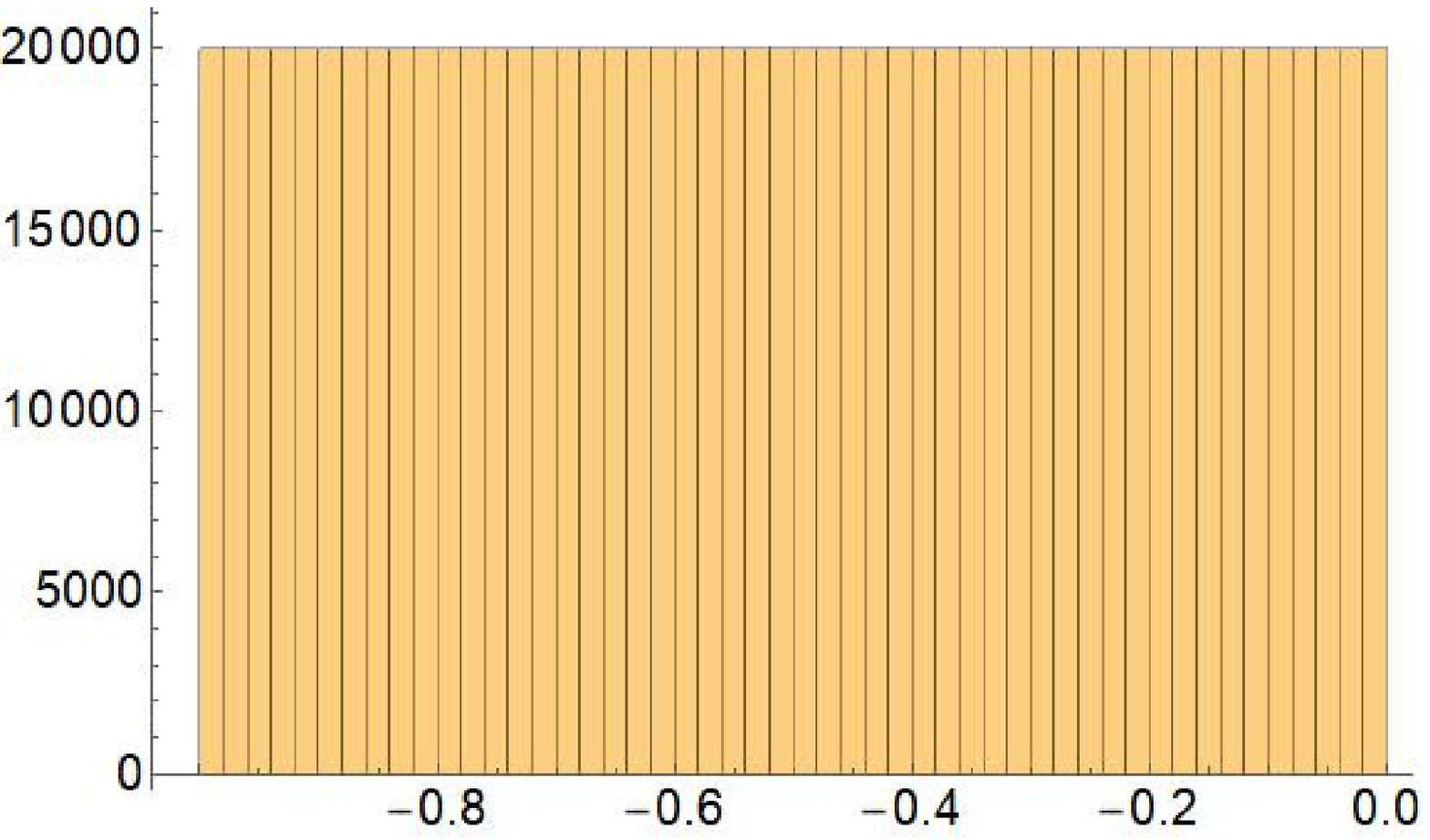}
\hspace{3pt}
\includegraphics[width=0.31\textwidth]{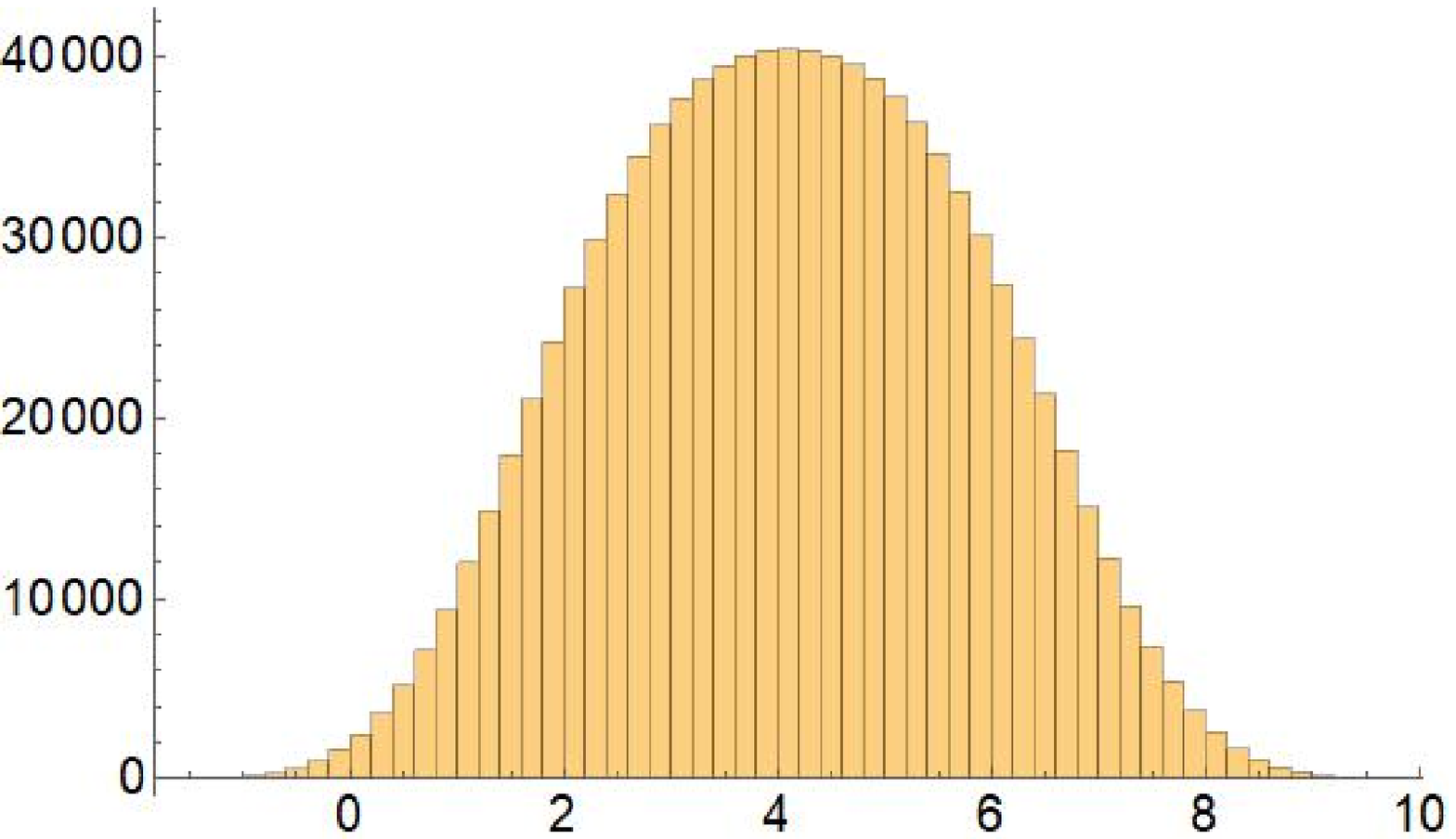}
\hspace{3pt}
\includegraphics[width=0.31\textwidth]{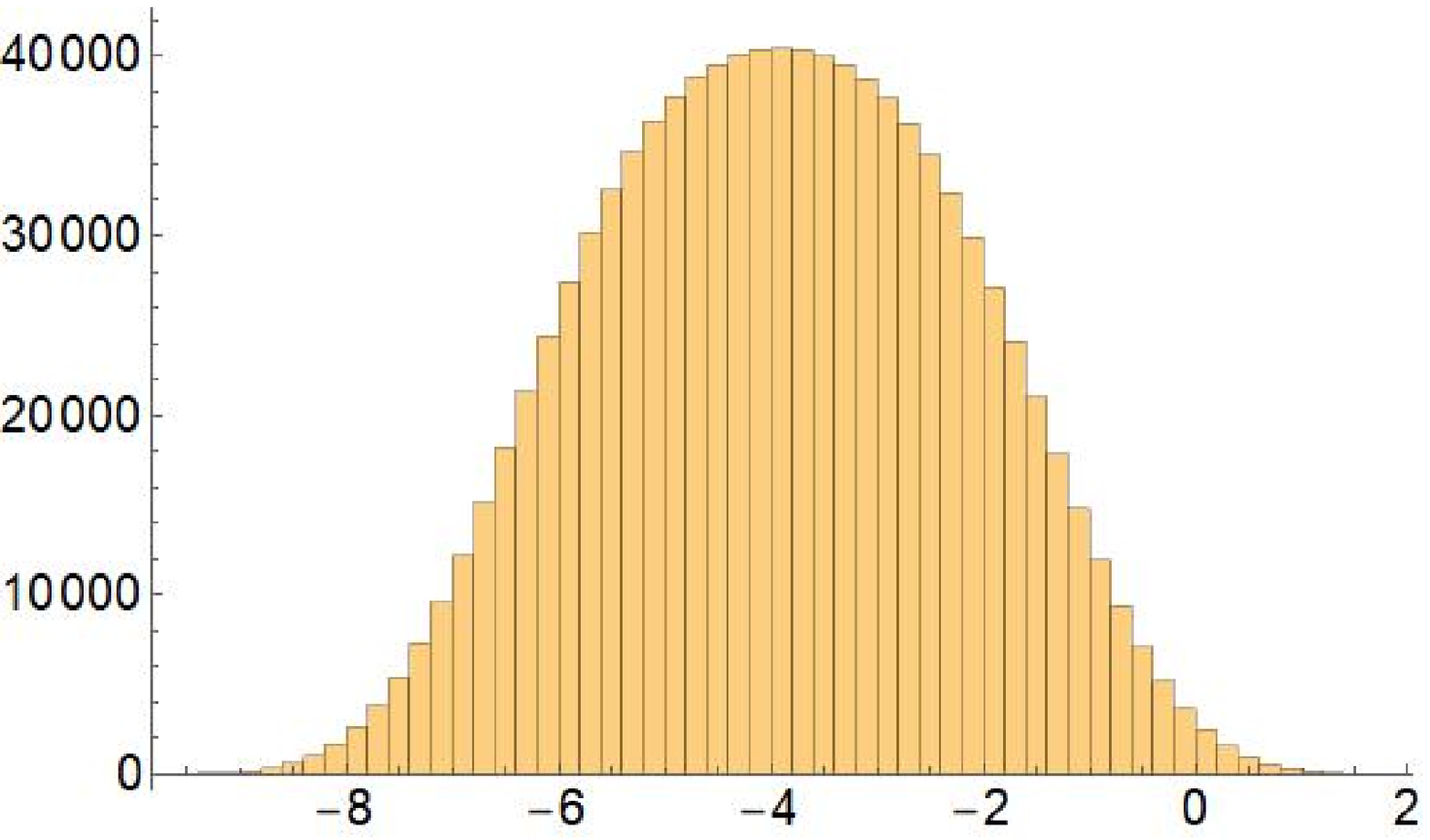}
\\[.5ex]
\includegraphics[width=0.31\textwidth]{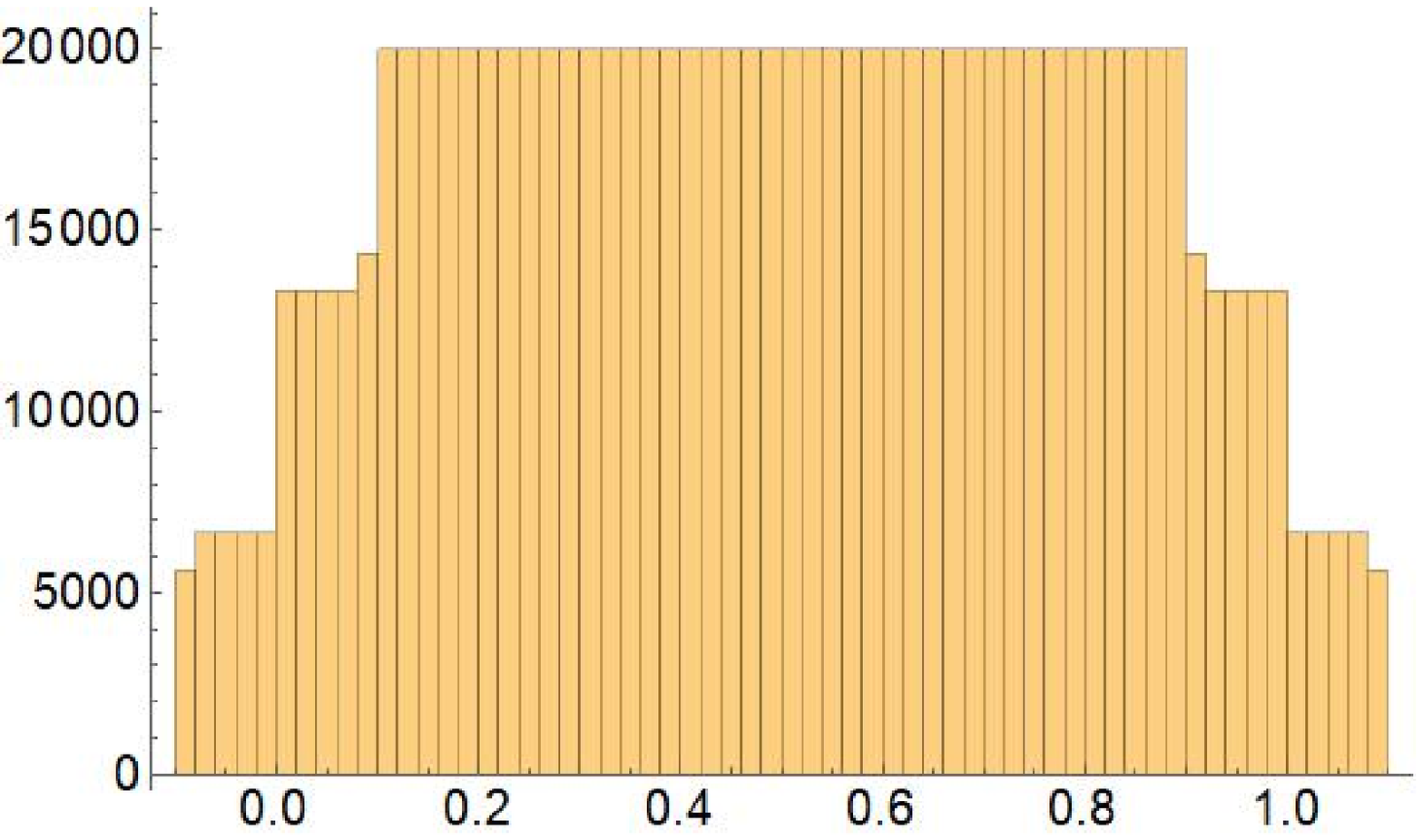}
\hspace{3pt}
\includegraphics[width=0.31\textwidth]{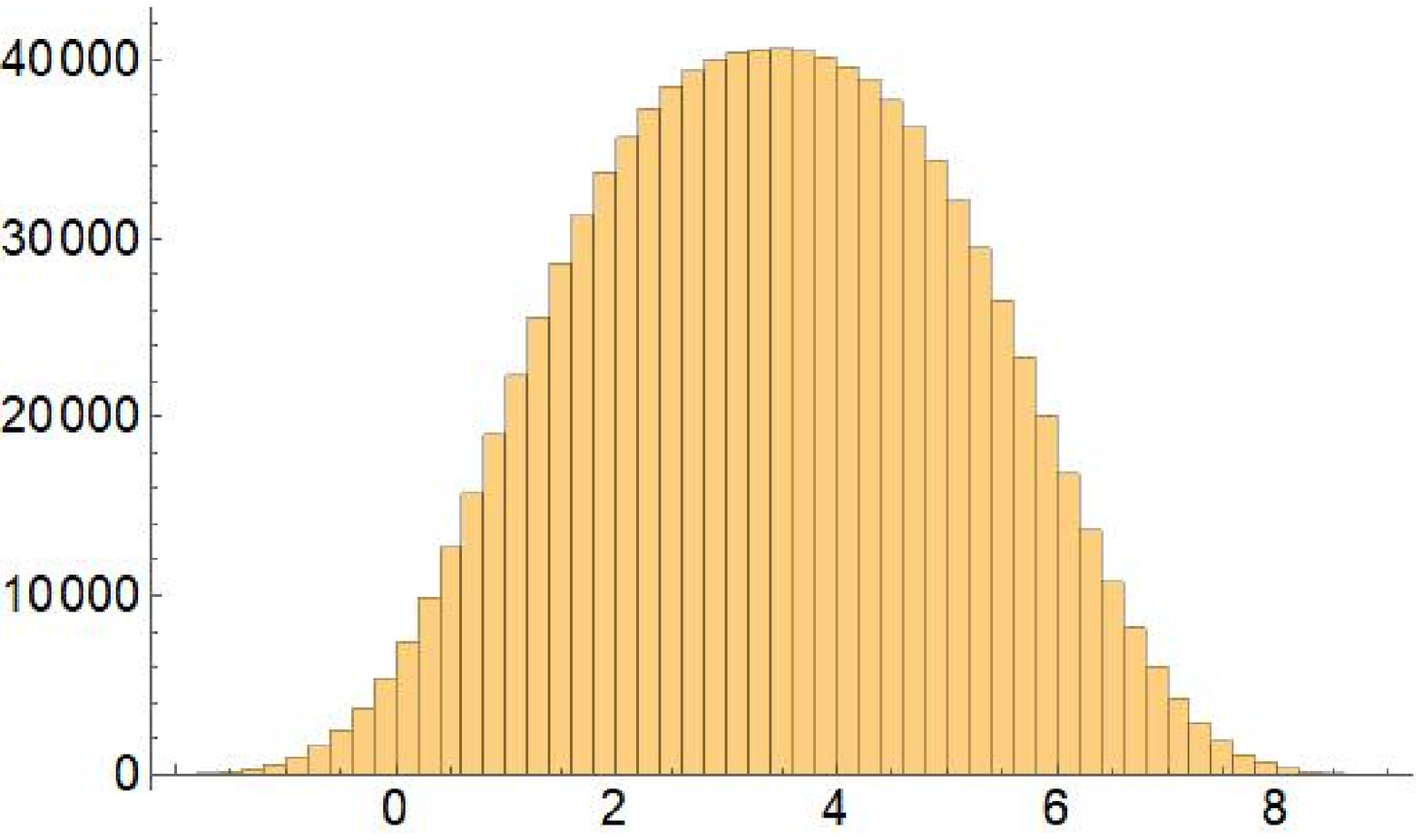}
\hspace{3pt}
\includegraphics[width=0.31\textwidth]{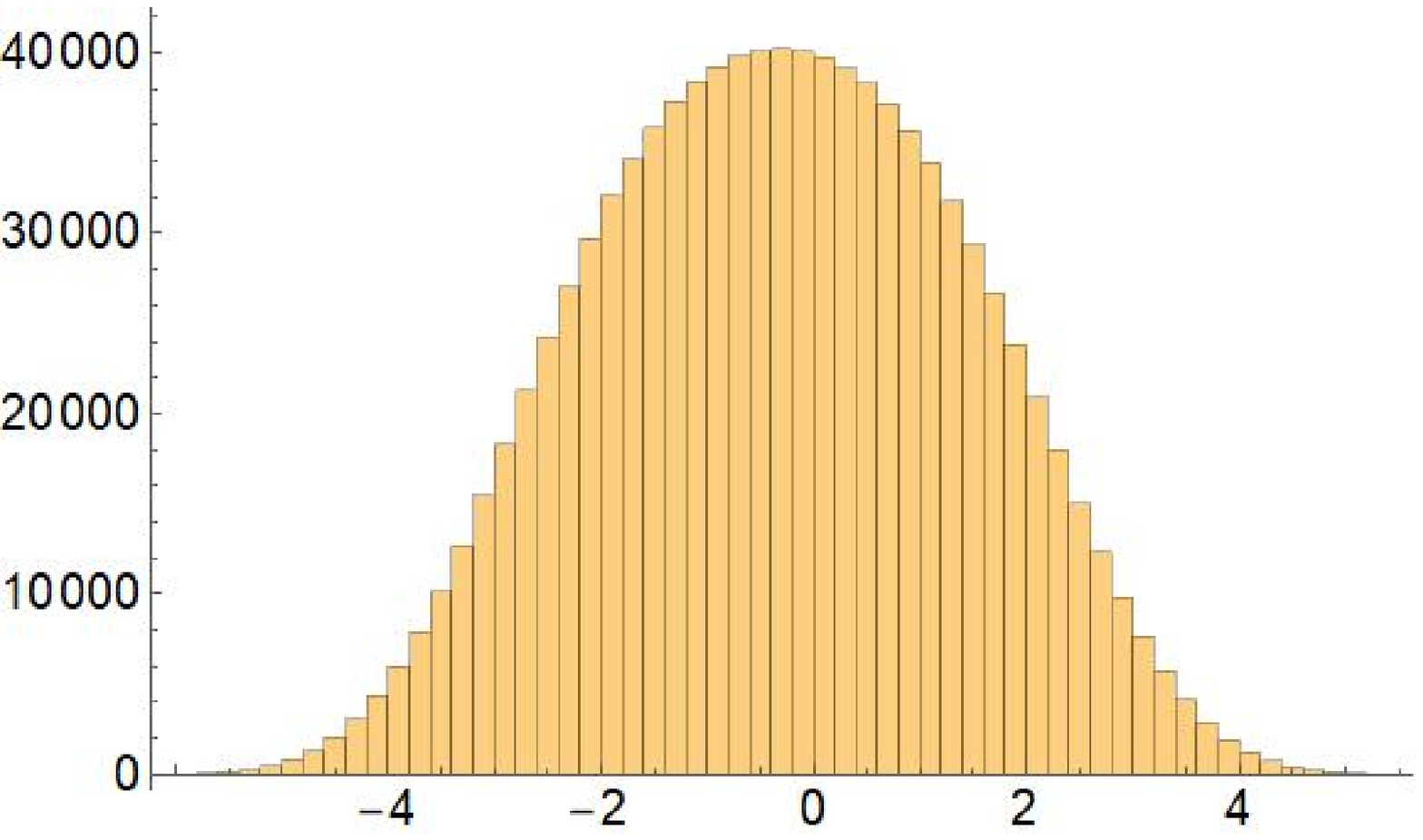}
\\[.5ex]
\includegraphics[width=0.31\textwidth]{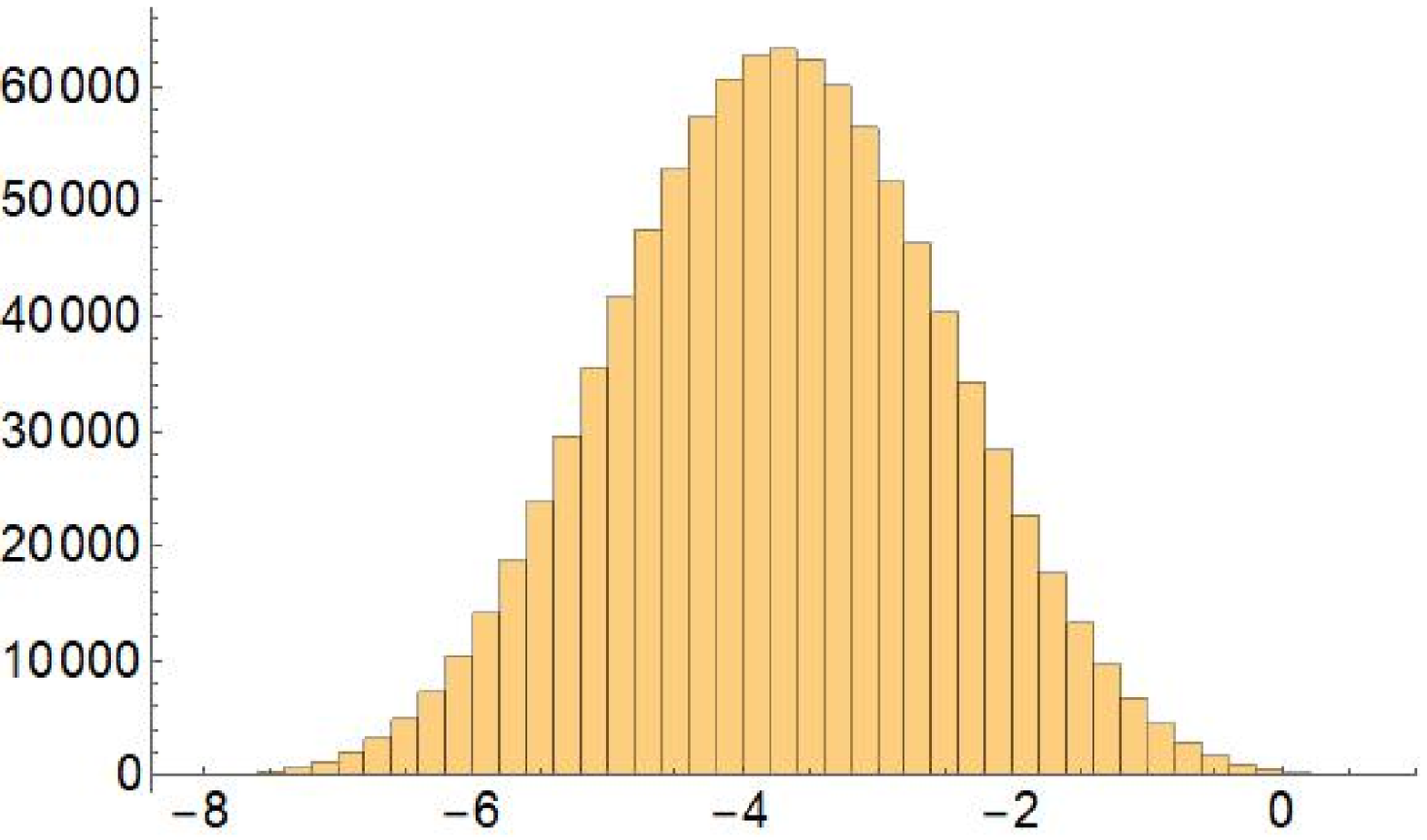}
\hspace{3pt}
\includegraphics[width=0.31\textwidth]{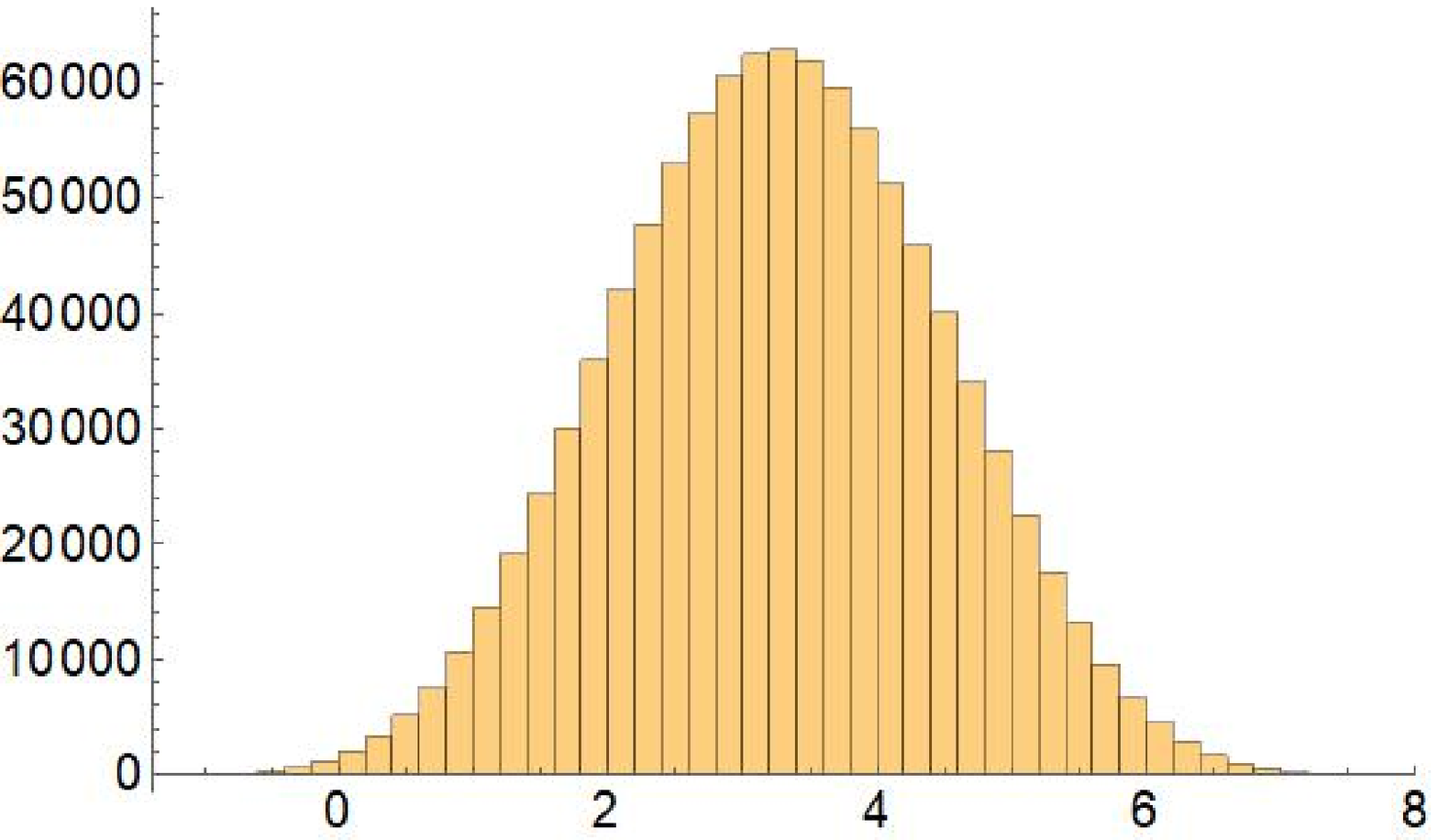}
\hspace{3pt}
\includegraphics[width=0.31\textwidth]{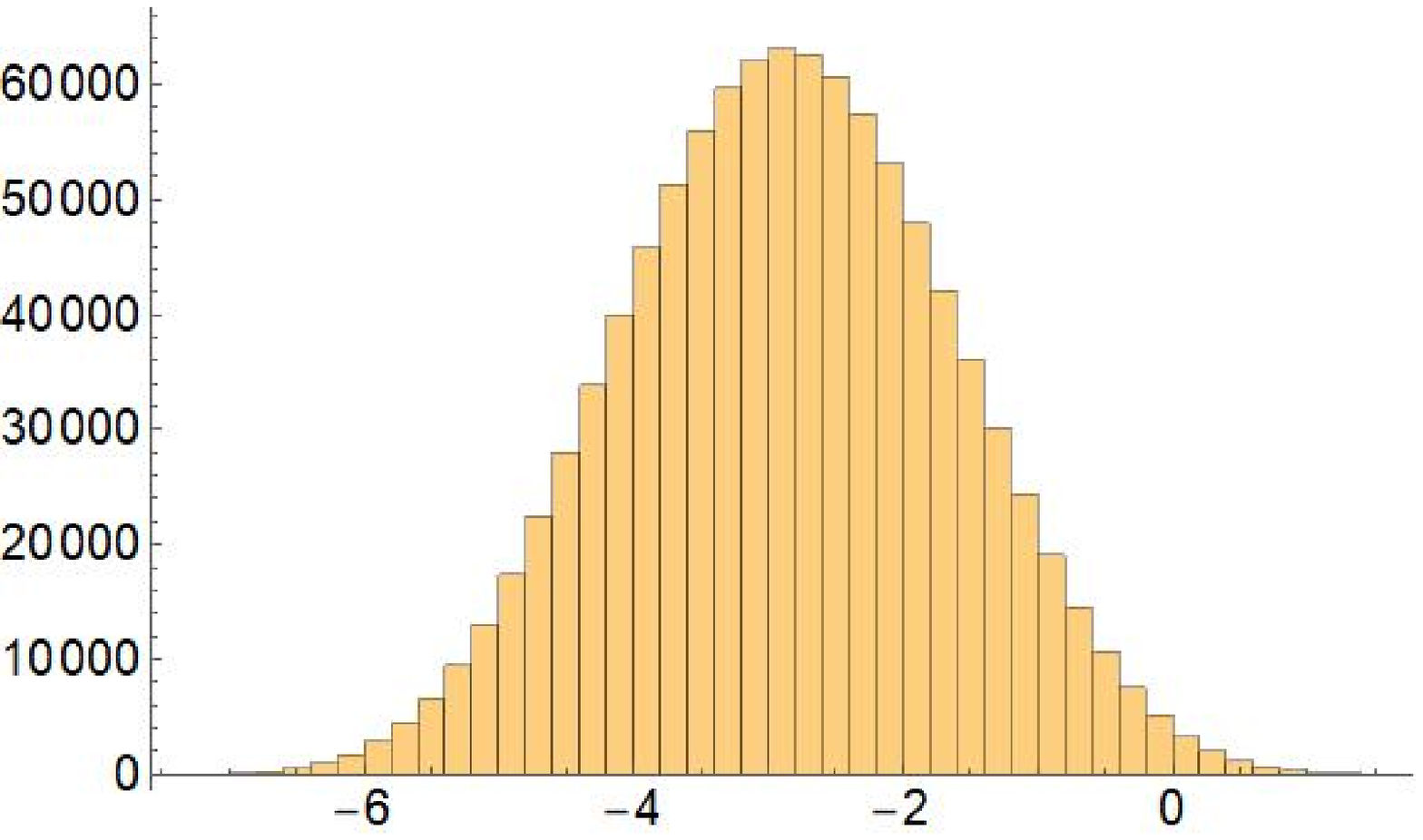}
\caption{Distribution of the Benford errors 
for the sequence $\{2^n\}$, based on the first billion terms of this sequence.
The three rows of histograms show the distributions 
of Benford errors for digits 1--3, 4--6, and 7--9, respectively.
}
\label{fig:histograms}
\end{center}
\end{figure}

The most noticeable, and least surprising, feature in Figure
\ref{fig:histograms}
is the approximately normal shape of seven of the nine distributions shown.
This suggests that the corresponding Benford errors are asymptotically normally
distributed.  The means and standard deviations of these distributions are
in the order of single digits, indicating a logarithmic, or even
sublogarithmic, growth rate. 

The error distribution for digit $1$ (shown in the top left histogram) also has
an easily recognizable shape: It appears to be a uniform distribution
supported on the interval $[-1,0]$. 

By contrast, the error distribution for digit $4$ (shown in the middle left
histogram) does not resemble any familiar distribution and seems to be a
complete mystery.  Unraveling this mystery, and discovering the underlying
general mechanism, has been a key motivation and driving force in our
research; we will describe the results later in this article.  In the
meantime, the reader may ponder the following question, keeping in mind
the possibility of Guy's ``strong law of small numbers'' being in action.

\begin{question}[Distribution of Benford Errors]
\label{questionDE}
Which, if any, of the distributions observed in Figure \ref{fig:histograms}
are ``for real''  in the sense that they represent the true 
asymptotic behavior  
of the Benford errors?
\end{question}

%%%%%%%%%%%%%%%%%%%%%%%%%%%%%%%%%%%%%%%%%%%%%%%%%%%%%%%%%%%%%%%%%%%%%%%%%
% Section 3 
%%%%%%%%%%%%%%%%%%%%%%%%%%%%%%%%%%%%%%%%%%%%%%%%%%%%%%%%%%%%%%%%%%%%%%%%%

\section{Unraveling the Digit 1 and 4 Mysteries, I.}
\label{sec:mystery1}

In this section we focus on the sequence $\{2^n\}$.  Using entirely
elementary arguments, we unravel some of the mysteries surrounding
the leading digit behavior of this sequence we have observed above.

We write
\[
S_d(N)=S_d(N,\{2^n\}),\quad
E_d(N)=E_d(N,\{2^n\})
\]
for the leading digit counting functions, respectively,  the Benford error
functions, associated with the sequence $\{2^n\}$.  We will need a
slight generalization of $S_d(N)$, defined by 
\begin{align}
\label{eq:def-SI}
S_I(N)=S_I(N,\{2^n\})&=\#\{n\le N: D(2^n)\in I\},
\end{align}
where $I$ is an interval in $[1,10)$.

The key to unlocking the digit $1$ and $4$ mysteries for the sequence
$\{2^n\}$ is contained in the following lemma, which provides an explicit
formula for $S_I(N)$ for certain intervals $I$. 

\begin{lem}
\label{lem:key-lemma}
Let $N\in\NN$ and $d\in\{1,2,\dots,5\}$. Then
\begin{equation}
\label{eq:key-lemma}
S_{[d,2d)}(N)=
\begin{cases}
\fl{N\lgt 2} & \text{if $d=1$,}
\\[.5ex]
\fl{N\lgt 2+\lgt(10/d)} & \text{if $2\le d\le 5$.}
\end{cases}
\end{equation}
\end{lem}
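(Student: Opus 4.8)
The plan is to rewrite the leading-digit condition as a condition on the fractional parts $\fr{n\alpha}$, where $\alpha=\lgt 2$, and then to exploit the fact that the relevant interval has length \emph{exactly} $\alpha$ in order to obtain an exact count, not merely an asymptotic one.

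\textbf{Reduction to fractional parts.} Writing $2^n=m_n\,10^{k_n}$ with $m_n\in[1,10)$ and $k_n\in\ZZ$, we have $m_n=10^{\fr{n\alpha}}$ and $D(2^n)=\fl{m_n}$. Since $d$ and $2d$ are integers, $D(2^n)\in[d,2d)$ is equivalent to $m_n\in[d,2d)$, hence to $\fr{n\alpha}\in[\beta,\beta+\alpha)$, where $\beta=\lgt d$ and $\beta+\alpha=\lgt(2d)$. The interval $[\beta,\beta+\alpha)$ has length exactly $\alpha$, and the hypothesis $1\le d\le 5$ is precisely what guarantees $\beta+\alpha=\lgt(2d)\le 1$, so that this interval does not wrap around modulo $1$.

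\textbf{A lattice-point reformulation.} Set $z_n=n\alpha-\beta$, so $z_n-z_{n-1}=\alpha$ for all $n$. Unwinding $\fr{n\alpha}=n\alpha-\fl{n\alpha}$, the condition $\fr{n\alpha}\in[\beta,\beta+\alpha)$ is equivalent to $\fl{n\alpha}\in(z_{n-1},z_n]$; and a short computation using $\beta+\alpha\le 1$ shows that the only integer that can lie in $(z_{n-1},z_n]$ is $\fl{n\alpha}$. Hence $D(2^n)\in[d,2d)$ holds if and only if $(z_{n-1},z_n]$ contains an integer, and therefore
\[
S_{[d,2d)}(N)=\#\{\,1\le n\le N:\ (z_{n-1},z_n]\cap\ZZ\neq\emptyset\,\}.
\]
Since $z_0<z_1<\dots<z_N$ with every gap equal to $\alpha<1$, the half-open intervals $(z_0,z_1],\dots,(z_{N-1},z_N]$ are disjoint, have union $(z_0,z_N]$, and each contains at most one integer; the displayed count is therefore just the number of integers in $(z_0,z_N]=(-\beta,\,N\alpha-\beta\,]$, namely $\fl{N\alpha-\beta}-\fl{-\beta}$.

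\textbf{Conclusion.} The last step is to substitute $\beta=\lgt d$. For $d=1$ we get $\beta=0$, $\fl{-\beta}=0$, and the count $\fl{N\lgt 2}$. For $d\in\{2,3,4,5\}$ we have $0<\beta<1$, so $\fl{-\beta}=-1$ and the count becomes $\fl{N\lgt 2-\lgt d}+1=\fl{N\lgt 2+\lgt(10/d)}$; this split into $\beta=0$ versus $0<\beta<1$ is exactly what produces the two cases of \eqref{eq:key-lemma}. I expect the one genuinely delicate point to be the assertion in the middle step that $(z_{n-1},z_n]$ can contain no integer other than $\fl{n\alpha}$: the hypothesis $d\le 5$ (equivalently $\lgt(2d)\le 1$, i.e.\ no wrap-around) is indispensable there, and for $d\ge 6$ both the argument and the stated formula fail. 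One should also check that the half-open conventions handle the boundary cases $\fr{n\alpha}=\beta$ correctly---these do occur (for instance when $d=2,\ n=1$ or $d=4,\ n=2$) but are harmless since $\alpha$ is irrational. The conceptual reason the count comes out exactly, with no $O(1)$ error term, is simply that the target interval has length equal to the step $\alpha$, so the associated intervals tile the real line without overlaps or gaps.
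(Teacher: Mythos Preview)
Your proof is correct. The reduction to fractional parts, the tiling of $(z_0,z_N]$ by the half-open intervals $(z_{n-1},z_n]$, and the final integer count $\lfloor N\alpha-\beta\rfloor-\lfloor-\beta\rfloor$ all check out, including the boundary cases you flag.

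The route, however, differs from the paper's. The paper stays on the multiplicative side: it observes directly that $D(2^n)\in[d,2d)$ means $2^n$ lies in some interval $[d\cdot 10^i,2d\cdot 10^i)$, notes that each such interval has the shape $[x,2x)$ and therefore contains \emph{exactly one} power of $2$, and then simply counts how many of these decade intervals meet $[2,2^N]$; that count is $k=\lfloor N\lgt 2-\lgt d\rfloor$ (respectively $k+1$) according as $d=1$ or $d\ge 2$. Your argument is the additive (logarithmic) dual of this: instead of one power of $2$ per decade interval, you use at most one integer per step interval $(z_{n-1},z_n]$, and instead of counting decades that meet $[2,2^N]$ you count integers in $(-\beta,N\alpha-\beta]$. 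The two ``exactly one'' facts are the same fact seen on opposite sides of the logarithm.

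What each approach buys: the paper's version is slightly more elementary and self-contained---no fractional-part manipulations, no need to verify that the only candidate integer in $(z_{n-1},z_n]$ is $\lfloor n\alpha\rfloor$. Your version, on the other hand, already casts the problem in the language of $\{n\alpha\}$ and interval hits, which is exactly the framework (interval discrepancy $\Delta(N,\alpha,I)$) the paper sets up later in Section~\ref{sec:benford-interval-discrepancy}; in that sense your argument integrates more smoothly with the rest of the article.
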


\begin{proof}
Let $N\in\NN$ and $d\in\{1,2,\dots,5\}$ be given.

Suppose first that $d>1$ and $2^N<d$. In this case we have 
$2^n\le 2^N<d$ and hence $D(2^n)<d$ for all $n\le N$, 
and thus $S_{[d,2d)}(N)=0$.
On the other hand, in view of the inequalities
\[
0< N\lgt2+\lgt(10/d)=\lgt(2^N/d)+1<1,
\]
we have $\fl{N\lgt2+\lgt(10/d)}=0$.  Therefore \eqref{eq:key-lemma} holds
trivially when $2^N<d$, and we can henceforth assume that 
\begin{equation}
\label{eq:key-lemma-hyp}
2^N\ge d.
\end{equation}

Let $k$ be the unique integer satisfying
\begin{equation}
\label{eq:k-def}
d\cdot 10^k \le 2^N<d\cdot 10^{k+1}.
\end{equation}
Our assumption \eqref{eq:key-lemma-hyp} ensures that $k$ is a
\emph{nonnegative} integer, and rewriting \eqref{eq:k-def} as
\[
\lgt d +k\le N \lgt2< \lgt d + k+1
\]
yields the explicit formula 
\begin{equation}
\label{eq:k-def2}
k=\Fl{N\lgt 2 - \lgt d}.
\end{equation}

Now observe that $d\le D(2^n)<2d$ holds if and only if 
$2^n$ falls into one of the intervals
\begin{equation}
\label{eq:key-lemma-d}
[d\cdot 10^i,2d\cdot 10^i), \quad i=0,1,\dots .
\end{equation}
Since each such interval is of the form $[x,2x)$, 
it contains exactly one term $2^n$.
Hence, since $2^N$ is one such term, 
the number of integers $n\le N$ counted in $S_{[d,2d)}(N)$ 
is equal to the number of integers $i$ for which 
the interval \eqref{eq:key-lemma-d} overlaps with the range $[2^1,2^N]$.
By the definition of $k$ (see \eqref{eq:k-def}), this holds if and only if
$1\le i\le k$ in the case $d=1$, and if and only if $0\le i\le k$ in the
case $d\ge 2$. Thus, 
$S_{[d,2d)}(N)$ 
is equal to $k$ in the first case, and $k+1$ in the second case. 
Substituting the explicit formula \eqref{eq:k-def2} for $k$
then yields the desired relation \eqref{eq:key-lemma}.
\end{proof}

From Lemma \ref{lem:key-lemma} we derive our first main result,  
an explicit formula for the Benford errors $E_1(N)$ and $E_4(N)$
associated with the sequence $\{2^n\}$.

\begin{thm}[Digit 1 and 4 Benford Errors for $\{2^n\}$]
\label{thm:digit1-4-errors}
Let $N$ be a positive integer. Then the Benford errors $E_d(N)=
E_d(N,\{2^n\})$ satisfy
\begin{align}
\label{eq:d1-explicit-formula}
E_1(N)&=-\Fr{N\alpha},
\\
\label{eq:d4-explicit-formula}
E_4(N)&=
\Fr{N\alpha}
+\Fr{N\alpha -\alpha}
+\Fr{N\alpha +\alpha} -1,
\end{align}
where $\alpha=\lgt2$.
(Recall that $\{x\}$ denotes the fractional part of $x$.)
\end{thm}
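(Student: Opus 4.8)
The plan is to read both formulas off Lemma \ref{lem:key-lemma}. The digit~$1$ case is immediate: since $D(x)=1$ exactly when $x\in[1,2)$ modulo powers of $10$, we have $S_1(N)=S_{[1,2)}(N)$, and taking $d=1$ in the lemma gives $S_1(N)=\fl{N\alpha}$ with $\alpha=\lgt 2$. As $B_1(N)=N\lgt 2=N\alpha$, this yields $E_1(N)=\fl{N\alpha}-N\alpha=-\fr{N\alpha}$, which is \eqref{eq:d1-explicit-formula}.

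The digit~$4$ case needs one extra idea, because $[4,5)$ is not among the intervals $[d,2d)$ treated by the lemma. The key observation is that the digit sets attached to the intervals $[1,2)$, $[2,4)$, and $[5,10)$, namely $\{1\}$, $\{2,3\}$, and $\{5,6,7,8,9\}$, are pairwise disjoint and, together with $\{4\}$, make up all of $\{1,\dots,9\}$. Since the nine counts $S_d(N)$ sum to $N$, this gives the identity
\[
S_4(N)=N-S_{[1,2)}(N)-S_{[2,4)}(N)-S_{[5,10)}(N).
\]
Applying Lemma \ref{lem:key-lemma} with $d=1,2,5$, and using $\lgt(10/2)=1-\alpha$ and $\lgt(10/5)=\alpha$, the three subtracted terms become $\fl{N\alpha}$, $\fl{N\alpha+1-\alpha}=1+\fl{N\alpha-\alpha}$, and $\fl{N\alpha+\alpha}$, respectively.

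What remains is bookkeeping. Since $B_4(N)=N\lgt(5/4)=N(1-3\alpha)$, subtracting gives
\[
E_4(N)=S_4(N)-B_4(N)=3N\alpha-\fl{N\alpha}-\fl{N\alpha-\alpha}-\fl{N\alpha+\alpha}-1,
\]
and writing each floor as its argument minus its fractional part makes the $N\alpha$ terms and the $\pm\alpha$ terms cancel, leaving exactly \eqref{eq:d4-explicit-formula}. The only step that is not purely mechanical is spotting the decomposition of $S_4(N)$ in terms of the three ``dyadic-type'' intervals; an equivalent route is to write $\{4\}=\{4,5,6,7\}-\{5,6,7,8,9\}+\{8,9\}$ and then replace $\{8,9\}$ by its complement, but this reduces to the same identity. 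I do not expect any obstacle beyond careful handling of the logarithm arithmetic, and the small-$N$ edge cases (where $2^N<d$) are already absorbed into Lemma \ref{lem:key-lemma}.
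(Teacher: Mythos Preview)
Your proof is correct and follows essentially the same route as the paper: both derive \eqref{eq:d1-explicit-formula} directly from the $d=1$ case of Lemma~\ref{lem:key-lemma}, and both handle digit~$4$ via the decomposition $S_4(N)=N-S_{[1,2)}(N)-S_{[2,4)}(N)-S_{[5,10)}(N)$ followed by three applications of the lemma with $d=1,2,5$ and the same floor-to-fractional-part bookkeeping. Your treatment of $\fl{N\alpha+1-\alpha}=1+\fl{N\alpha-\alpha}$ is in fact slightly tidier than the paper's version of the same step.
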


\begin{proof}
Since $S_1(N)=S_{[1,2)}(N)$, 
applying Lemma \ref{lem:key-lemma} with $d=1$ gives
\begin{align*}
E_1(N)&=S_1(N)-B_1(N) 
=\fl{N\lgt 2}-N\lgt\left(1+\frac11\right) =- \Fr{N\lgt 2},
\end{align*}
which proves \eqref{eq:d1-explicit-formula}.

The proof of the second formula is slightly more involved. Noting that 
$S_4(N)=S_{[4,5)}(N)$, we have
\begin{equation*}
S_4(N)= N-S_{[1,2)}(N) -S_{[2,4)}(N) -S_{[5,10)}(N).
\end{equation*}
Applying Lemma \ref{lem:key-lemma} to each of the terms on the right of this
relation, we obtain 
\begin{align*}
S_4(N)&=N-
\Fl{N\lgt 2}
-\Fl{N\lgt 2 + \lgt \frac{10}{2}}
-\Fl{N\lgt 2 + \lgt \frac{10}{5}}
\\
&=N\left(1-3\lgt2\right) -1 + \Fr{N\alpha}
-\Fr{N\alpha + 1-\alpha} -\Fr{N\alpha  + \alpha}
\\
&=N\lgt\frac{5}{4}
-1 + \Fr{N\alpha}
-\Fr{N\alpha -\alpha} -\Fr{N\alpha  + \alpha}.
\end{align*}
Since $E_4(N)=S_4(N)-N\lgt(5/4)$, this yields the desired formula 
\eqref{eq:d4-explicit-formula}.
\end{proof}

As an immediate consequence of the formulas 
\eqref{eq:d1-explicit-formula} and \eqref{eq:d4-explicit-formula} we
obtain the bounds
\begin{align}
\label{eq:d1-bound}
&-1<E_1(N)\le 0
\quad \text{for all $N\in\NN$},
\\
\label{eq:d4-bound}
&-1\le E_4(N)<2\quad \text{for all $N\in\NN$}.
\end{align}
In particular, the Benford errors for digits $1$ and $4$ for the sequence
$\{2^n\}$ are bounded, thus providing a partial answer to Question
\ref{questionBE}.  
Moreover, the bound \eqref{eq:d1-bound}  is precisely the condition
\eqref{eq:perfect-hit1-criterion} characterizing a (lower) perfect hit,  
so the Benford prediction for leading digit $1$ for the sequence
$\{2^n\}$ is indeed a true perfect hit in the  sense of Definition
\ref{def:perfect-hits}.  Hence, at least one of the nine (potential)
``perfect hits'' observed in Table \ref{table:benford-1bil} turned out
to be ``for real'': the leading digit $1$ count for the sequence
$\{2^n\}$ is \emph{always} equal to the Benford prediction rounded  
down to the nearest integer.

What about the other eight entries in this table that represented
perfect hits at $N=10^9$?  Are these ``for real'' as well, or are they
mere coincidences?  Are there cases where rounding \emph{up} the Benford
prediction always gives the exact leading digit count?
We will address these questions in Section \ref{sec:bounded-error} below,
but we first use the results of Theorem \ref{thm:digit1-4-errors}
to settle another numerical mystery, namely the distribution of the
Benford errors for digits $1$ and $4$ in Figure \ref{fig:histograms}.

\section{Unraveling the Digit 1 and 4 Mysteries, II.}
\label{sec:mystery2}

Continuing our focus on the sequence $\{2^n\}$, we now turn to the  
\emph{distribution} of the Benford errors $E_1(N)$ and $E_4(N)$ for this
sequence and we seek to  explain the peculiar shapes of these
distributions that we had observed in Figure \ref{fig:histograms}.
We will prove the following. 

\begin{thm}[Distribution of Digit 1 and 4 Benford Errors for $\{2^n\}$]
\label{thm:digit1-4-distribution}
The sequences $\{E_1(n)\}$ and $\{E_4(n)\}$ satisfy, 
for any real numbers $s<t$, 
\begin{equation}
\label{eq:limit-distribution}
\lim_{N\to\infty}\frac1N\#\{n\le N: s\le E_i(n)<t\}=\int_s^tf_i(x)\,dx
\quad (i=1,4),
\end{equation}
where $f_1(x)$ and $f_4(x)$ are defined by
\begin{align}
\label{eq:d1-density}
f_1(x)&=\begin{cases} 1 &\text{if\, \ $-1\le x\le 0$,}
\\
0&\text{otherwise,}
\end{cases}
\\
\label{eq:d4-density}
f_4(x)&=\begin{cases} 
1/3 & \text{if\, \ $3\alpha-1\le x\le 0$\,\  or\, \ $1\le x<2-3\alpha$,}
\\
2/3 & \text{if\, \ $0\le x<1-3\alpha$\,\ or\, \ $3\alpha \le x<1$,}
\\
1 & \text{if\, \ $1-3\alpha\le x<3\alpha$,}
\\
0 & \text{otherwise,}
\end{cases}
\end{align}
where $\alpha=\lgt 2=0.30103\dots$.  
\end{thm}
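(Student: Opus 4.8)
My plan is to derive both limit laws directly from Weyl's equidistribution theorem, using the explicit formulas for $E_1(N)$ and $E_4(N)$ furnished by Theorem~\ref{thm:digit1-4-errors}. Since $\alpha=\lgt2$ is irrational, the sequence $(\fr{n\alpha})_{n\ge1}$ is uniformly distributed modulo $1$: for every Riemann-integrable $h\colon[0,1)\to\RR$ one has $\frac1N\sum_{n\le N}h(\fr{n\alpha})\to\int_0^1 h(x)\,dx$, equivalently $\frac1N\#\{n\le N:\fr{n\alpha}\in J\}\to|J|$ for every interval $J\subseteq[0,1)$. The crucial observation is that the right-hand sides of \eqref{eq:d1-explicit-formula} and \eqref{eq:d4-explicit-formula} depend on $N$ only through $\theta:=\fr{N\alpha}$: because $\fl{N\alpha}\in\ZZ$ we have $\fr{N\alpha\mp\alpha}=\fr{\theta\mp\alpha}$, so $E_4(N)=g(\fr{N\alpha})$ for the \emph{fixed} function $g(\theta)=\theta+\fr{\theta-\alpha}+\fr{\theta+\alpha}-1$; thus no two-dimensional equidistribution is needed and the argument stays elementary. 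For $i=1$ the conclusion is then immediate: by \eqref{eq:d1-explicit-formula}, $s\le E_1(n)<t$ is equivalent to $\fr{n\alpha}\in(-t,-s]$ (for $n\ge1$ the value $\fr{n\alpha}$ is never $0$, so intersecting this interval with $[0,1)$ changes nothing), and equidistribution gives the limit $\bigl|[s,t)\cap[-1,0]\bigr|=\int_s^t f_1(x)\,dx$.

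For $i=4$ the first step is to make $g$ explicit. Using $0<\alpha<1-\alpha<1$, I split $[0,1)$ at the points $\alpha$ and $1-\alpha$; unwinding $\fr{\theta-\alpha}$ and $\fr{\theta+\alpha}$ on each of the resulting three pieces and substituting into the definition of $g$ gives
\[
g(\theta)=
\begin{cases}
3\theta & \text{if } 0\le\theta<\alpha,\\
3\theta-1 & \text{if } \alpha\le\theta<1-\alpha,\\
3\theta-2 & \text{if } 1-\alpha\le\theta<1,
\end{cases}
\]
a piecewise linear function of constant slope $3$ whose three branches map $[0,1)$ onto $[0,3\alpha)$, $[3\alpha-1,2-3\alpha)$, and $[1-3\alpha,1)$, respectively. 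Hence, for any $s<t$, the set $g^{-1}\bigl([s,t)\bigr)$ is a union of at most three intervals, in particular Jordan measurable; writing $\#\{n\le N:s\le E_4(n)<t\}=\sum_{n\le N}\mathbf{1}_{[s,t)}(g(\fr{n\alpha}))$ and invoking equidistribution we obtain
\begin{align*}
\lim_{N\to\infty}\frac1N\#\{n\le N:s\le E_4(n)<t\}
&=\bigl|g^{-1}\bigl([s,t)\bigr)\bigr|\\
&=\int_s^t\tfrac13\Bigl(\mathbf{1}_{[0,3\alpha)}+\mathbf{1}_{[3\alpha-1,2-3\alpha)}+\mathbf{1}_{[1-3\alpha,1)}\Bigr)(x)\,dx,
\end{align*}
the last equality because each branch, having slope $3$, contributes length $\tfrac13\,dx$ of preimage above each point of its image.

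The remaining task---and the only step that requires real care---is to check that this average of three indicator functions is the density $f_4$ of \eqref{eq:d4-density}. One orders the six relevant breakpoints as $3\alpha-1<0<1-3\alpha<3\alpha<1<2-3\alpha$ (all strict, since $\alpha=0.30103\dots$) and, on each of the five bounded subintervals they cut out, counts how many of $[0,3\alpha)$, $[3\alpha-1,2-3\alpha)$, $[1-3\alpha,1)$ cover it: reading left to right the counts come out $1,2,3,2,1$, and $0$ outside $[3\alpha-1,2-3\alpha)$. Dividing by $3$ reproduces exactly the four-case description of $f_4$---the lone middle branch accounts for the flat tails of height $1/3$, the triple overlap near the center for the plateau of height $1$. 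Finally one notes, trivially since $\alpha$ is irrational, that $\fr{n\alpha}$ never equals any of these breakpoints, so $g(\fr{n\alpha})$ is unambiguously defined; but points carry no length, so this---like the open/closed conventions at $s$ and $t$---is immaterial for the limit.
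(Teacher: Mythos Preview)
Your proof is correct and follows essentially the same approach as the paper: both start from the explicit formulas of Theorem~\ref{thm:digit1-4-errors}, observe that $E_i(n)$ depends only on $\fr{n\alpha}$, invoke Weyl's theorem, and then split into the three ranges $[0,\alpha)$, $[\alpha,1-\alpha)$, $[1-\alpha,1)$ to identify the density. The only real difference is that you carry out the piecewise computation of $f_4$ in full, whereas the paper stops at the random-variable formulation $X_4=U+\fr{U+\alpha}+\fr{U-\alpha}-1$ and declares the remaining verification ``routine, but somewhat tedious.''
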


The function $f_1(x)$ is the probability density of a uniform distribution
on the interval $[-1,0]$.  The function $f_4(x)$,
shown in Figure \ref{fig:f4} below, is a weighted average of three uniform
densities supported on the intervals 
$[1-3\alpha,3\alpha]\approx [0.097,0.903]$, 
$[0,1]$, and $[3\alpha-1,2-3\alpha]\approx [-0.097,1.097]$, respectively. 

\begin{figure}[H]
\begin{center}
\includegraphics[width=.7\textwidth,height=0.35\textwidth]
{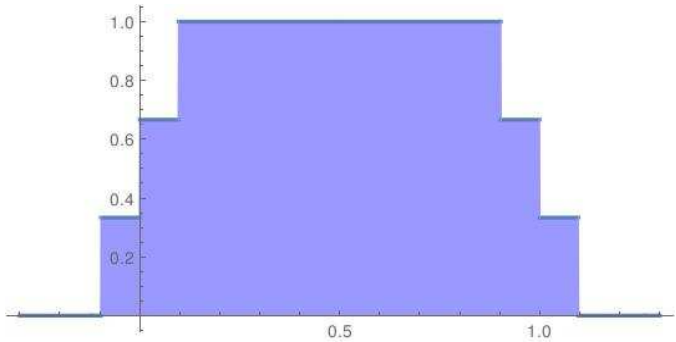}
\end{center}
\caption{The probability density function $f_4(x)$.}
\label{fig:f4}
\end{figure}

The theorem shows that the error distributions for digits $1$ and
$4$ we had observed in Figure \ref{fig:histograms} are ``for real'':
The digit $1$ error is indeed uniformly distributed over the interval
$[-1,0]$, while the ``mystery distribution'' of the digit $4$ error 
turns out to be a superposition of three uniform distributions,
given by the density function $f_4(x)$. 

\begin{proof}[Proof of Theorem \ref{thm:digit1-4-distribution}]
By Theorem \ref{thm:digit1-4-errors} we have 
\begin{align*}
E_1(n)&=-\fr{n\alpha},
\\
E_4(n)&=\fr{n\alpha}+\fr{n\alpha+\alpha} +\fr{n\alpha-\alpha}-1.
\end{align*}
The distribution of the numbers $\{n\alpha\}$ in these formulas 
is well understood: Indeed, since $\alpha=\lgt2$ is irrational, by  
\emph{Weyl's theorem} (see, e.g.,  \cite{weyl}), these numbers 
behave like a uniform random variable on the interval $[0,1]$,
in the sense that 
for any real numbers $s,t$ with $0\le s<t\le 1$,  
\begin{equation}
\notag
\label{eq:weyl}
\lim_{N\to\infty}\frac1N \#\{n\le N:  s\le \{n\alpha\}<t\}=t-s.
\end{equation}
% \eqref{eq:d1-explicit-formula} and \eqref{eq:d4-explicit-formula} 
It follows that 
the limit distributions of $E_1(n)$ and $E_4(n)$ exist and 
are those of the random variables 
\begin{align}
\label{eq:X1}
X_1&=-U,
\\
\label{eq:X4}
X_4&= U+\fr{U+\alpha}+\fr{U-\alpha}-1,
\end{align}
where $U$ is a uniform random variable on $[0,1]$.

From \eqref{eq:X1} we immediately obtain that $X_1$ is a
uniform random variable on $[-1,0]$ and hence has density given by the
function $f_1(x)$ defined above. Moreover, 
by considering separately the ranges $0\le U<\alpha$, $\alpha\le U<1-\alpha$, 
and $1-\alpha\le U\le 1$ in \eqref{eq:X4}, one can check that
$X_4$ is a superposition of three uniform distributions 
corresponding to these three ranges, and that the  
density of $X_4$ is given by the function $f_4(x)$ defined above;
we omit the details of this routine, but somewhat tedious, calculation.
\end{proof}

%%%%%%%%%%%%%%%%%%%%%%%%%%%%%%%%%%%%%%%%%%%%%%%%%%%%%%%%%%%%%%%%%%%%%%%%%
% Section 4 
%%%%%%%%%%%%%%%%%%%%%%%%%%%%%%%%%%%%%%%%%%%%%%%%%%%%%%%%%%%%%%%%%%%%%%%%%

\section{Benford Errors and Interval Discrepancy.}
\label{sec:benford-interval-discrepancy}

We now consider the case of a general geometric sequence $\{a^n\}$, 
where $a$ is a positive real number (not necessarily an integer),
subject only to the condition
\begin{equation}
\label{eq:a-condition}
\lgt a\not\in \QQ.
\end{equation}
Condition \eqref{eq:a-condition} serves to exclude sequences such as
$\{\sqrt{10}^{\,\,n}\}$ for which the leading digits behave in a
trivial manner.

To make further progress, we exploit the connection between the
distribution of leading digits of a sequence and the theory of uniform
distribution modulo $1$.  This connection is well known, and it has been
used to rigorously establish Benford's law for various classes of
mathematical sequences; see, for example, \cite{diaconis1977}.  For
our purposes, we need a specific form of this connection that involves the 
concept of \emph{interval discrepancy} defined as follows. 

\begin{defn}[Interval Discrepancy] 
\label{defn:interval-discrepancy}
Let $\alpha$ be a real number,  and let $I$ be an interval in
$[0,1]$.  For any $N\in\NN$, we define 
the \textbf{interval discrepancy} of the sequence $\{n\alpha\}$ with
respect to the interval $I$  by 
\begin{equation}
\label{eq:def-interval-discrepancy}
\Delta(N,\alpha,I)=\#\{n\le N: \{n\alpha\}\in I\}- N|I|,
\end{equation}
where $|I|$ denotes the length of $I$.  
\end{defn}

The point of this definition is that it allows us to express the Benford
error $E_d(N,\{a^n\})$ directly in the form $\Delta(N,\alpha,I)$ with   
suitable choices of $\alpha$ and $I$: 

\begin{lem}[Benford Errors and Interval Discrepancy]
\label{lem:benford-interval-discrepancy}
Let $a$ be a positive real number, $N\in\NN$, and $d\in\{1,2,\dots,9\}$.
Then we have
\begin{equation}
\label{eq:benford-interval-discrepancy}
E_d(N,\{a^n\})=\Delta(N,\alpha,[\lgt d, \lgt (d+1)),
\end{equation}
where $\alpha=\lgt a$.
\end{lem}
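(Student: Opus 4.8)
The plan is to unwind both sides of \eqref{eq:benford-interval-discrepancy} to the same expression in terms of the sequence $\{n\alpha\}$ with $\alpha = \lgt a$. First I would recall the elementary dictionary between leading digits and fractional parts: for a positive real $x$, one has $D(x) = d$ exactly when $\lgt d \le \{\lgt x\} < \lgt(d+1)$, since $\{\lgt x\}$ is the base-$10$ mantissa exponent and the mantissa of $x$ lies in $[d, d+1)$ precisely in that range (note $\lgt(d+1) \le \lgt 10 = 1$, so the target interval is genuinely contained in $[0,1)$, with the case $d=9$ giving the half-open interval $[\lgt 9, 1)$). Applying this with $x = a^n$, so that $\lgt x = n\lgt a = n\alpha$, yields the equivalence
\[
D(a^n) = d \iff \{n\alpha\} \in [\lgt d, \lgt(d+1)).
\]

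Next I would use this equivalence to rewrite the leading digit counting function. By the definition of $S_d$ and the displayed equivalence,
\[
S_d(N,\{a^n\}) = \#\{n \le N : D(a^n) = d\} = \#\{n \le N : \{n\alpha\} \in [\lgt d, \lgt(d+1))\}.
\]
On the other hand, the interval $I = [\lgt d, \lgt(d+1))$ has length $|I| = \lgt(d+1) - \lgt d = \lgt\!\left(1 + \tfrac1d\right) = P(d)$, so $N|I| = NP(d) = B_d(N)$. Subtracting, and invoking Definition \ref{defn:interval-discrepancy},
\[
E_d(N,\{a^n\}) = S_d(N,\{a^n\}) - B_d(N) = \#\{n \le N : \{n\alpha\} \in I\} - N|I| = \Delta(N,\alpha,I),
\]
which is exactly \eqref{eq:benford-interval-discrepancy}.

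There is no serious obstacle here; the proof is a matter of assembling the right definitions, and condition \eqref{eq:a-condition} is not even needed for this identity (it only becomes relevant when one wants $\{n\alpha\}$ to be equidistributed). The one point deserving a sentence of care is the boundary behavior: I should note that for $1 \le d \le 8$ the half-open interval $[\lgt d, \lgt(d+1))$ correctly captures $\{D(a^n) = d\}$ without overlap with the next digit's interval, and for $d = 9$ the interval $[\lgt 9, 1)$ together with the fact that $\{n\alpha\}$ never equals an integer (so $\{n\alpha\} \ne \lgt 10$ is a non-issue) makes the accounting exact. With that remark in place the lemma follows immediately from the leading-digit-to-mantissa correspondence and the computation $\lgt(d+1) - \lgt d = P(d)$.
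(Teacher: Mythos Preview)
Your proof is correct and follows essentially the same approach as the paper's: establish the equivalence $D(a^n)=d \iff \{n\alpha\}\in[\lgt d,\lgt(d+1))$ via the mantissa interpretation of $\{\lgt x\}$, rewrite $S_d(N,\{a^n\})$ as the corresponding count, and subtract $B_d(N)=N|I|$ to recognize $\Delta(N,\alpha,I)$. Your side remarks on boundary behavior and on condition \eqref{eq:a-condition} being unnecessary for this identity are accurate but not required for the argument.
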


\begin{proof} 
Note that, for any $n\in\NN$, 
\begin{align*}
D(a^n)=d &\Longleftrightarrow d\cdot 10^i\le a^n < (d+1)\cdot 10^{i}
\quad\text{for some $i\in\ZZ$}
\\
&\Longleftrightarrow \lgt d + i \le n\lgt a < \lgt(d+1)+ i
\quad\text{for some $i\in\ZZ$}
\\
&\Longleftrightarrow \fr{n\alpha} \in [\lgt d,\lgt (d+1)),
\end{align*}
since $\lgt a^n = n\lgt a = n\alpha$.  It follows that 
\begin{equation*}
S_d(N,\{a^n\})=\#\{n\le N: \fr{n\alpha}\in[\lgt d,\lgt(d+1))\},
\end{equation*}
and subtracting $B_d(N)=N\lgt(1+1/d)=N(\lgt(d+1)-\lgt d)$ on each side 
yields the desired relation \eqref{eq:benford-interval-discrepancy}.
\end{proof}

We remark that the interval discrepancy defined above 
is different from the usual notion of discrepancy of a sequence
in the theory of uniform distribution modulo $1$,  defined as 
(see, for example, \cite{drmota1997,kuipers1974})
\begin{equation*}
\label{eq:def-discrepancy}
D_N(\{n\alpha\})=\frac1N\sup_{0\le s<t\le 1} |\Delta(N, \alpha,[s,t))|.
\end{equation*}
While there exists a large body of work on the asymptotic behavior 
of the ordinary discrepancy function $D_N$, much less is known about 
the interval discrepancy $\Delta(N,\alpha, I)$.  In the following 
sections we describe some of the key results on interval discrepancies, and
we apply these results to Benford errors.  

%%%%%%%%%%%%%%%%%%%%%%%%%%%%%%%
% interval discrepancy
%%%%%%%%%%%%%%%%%%%%%%%%%%%%%%%

\section{Interval Discrepancy: Results of Ostrowski 
and Kesten.}
\label{sec:interval-discrepancy1}

In view of Lemma 
\ref{lem:benford-interval-discrepancy}, the question of whether the Benford
error is bounded leads naturally to the 
following question about the  behavior of the interval discrepancy:

\begin{question}
Under what conditions on $\alpha$ and $I$ is 
the interval discrepancy $\Delta(N,\alpha,I)$ bounded as $N\to\infty$?
\end{question}

It turns out that this question has a simple and elegant answer, given by 
the following theorem of Kesten.

\begin{prop}[Bounded Interval Discrepancy \cite{kesten}]
\label{prop:kesten}
Let $\alpha$ be irrational, and let $I=[s,t)$, where $0\le s<t\le 1$.
Then $\Delta(N,\alpha,I)$ is bounded as $N\to\infty$ if and only if 
\begin{equation}
\label{eq:kesten-condition}
t-s=\{k\alpha\}
\quad \text{for some $k\in\ZZ\setminus\{0\}$.}
\end{equation}
\end{prop}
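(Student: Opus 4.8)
\medskip
\noindent\textbf{A proof proposal.}
The natural framework is the \emph{cohomological equation} for the circle rotation $R_\alpha\colon x\mapsto x+\alpha$ on $\RR/\ZZ$. Put $f(x)=\mathbf 1_I(\{x\})-|I|$, so $\Delta(N,\alpha,I)=\sum_{n=1}^{N}f(\{n\alpha\})$ is a Birkhoff sum of $f$, and boundedness of $\Delta$ should amount to $f$ being a coboundary, $f=g\circ R_\alpha-g$. Writing $I=[s,t)$, $\beta=t-s$, and letting $((x))=\{x\}-\tfrac12$ (with $((x))=0$ on $\ZZ$) denote the sawtooth, one has the a.e.\ identity $f(x)=((x-t))-((x-s))$, whence $\widehat f(m)=\bigl(e^{-2\pi ims}-e^{-2\pi imt}\bigr)/(2\pi im)$ and $|\widehat f(m)|^{2}=\sin^{2}(\pi m\beta)/(\pi m)^{2}$ for $m\ne 0$; in particular $|\widehat f(m)|$ depends only on the length $\beta$, which already foreshadows that the criterion involves $t-s$ alone.

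For the ``if'' direction I would exhibit an explicit bounded transfer function. If $\beta=\{k\alpha\}$ with $k\in\ZZ\setminus\{0\}$, then $((x-t))=((x-s-k\alpha))$ by $1$-periodicity, and telescoping a sum of $|k|$ translated sawtooths gives $f=g\circ R_\alpha-g$ with $g(x)=-\sum_{j=1}^{k}((x-s-j\alpha))$ when $k>0$ (and the analogous sum with $+j\alpha$ when $k<0$), so $\|g\|_\infty\le|k|/2$. Summing $f$ along the orbit then collapses to $g(\{(N+1)\alpha\})-g(\{\alpha\})$, giving $|\Delta(N,\alpha,I)|\le|k|+O(1)$ for all $N$, where the $O(1)$ absorbs the finitely many orbit points at which the pointwise identity for $f$ fails.

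For ``only if'', suppose $|\Delta(N,\alpha,I)|\le C$ for all $N$. Since $\Delta(m+N,\alpha,I)-\Delta(m,\alpha,I)=\sum_{n=1}^{N}f(\{(m+n)\alpha\})=(S_Nf)(\{m\alpha\})$ with $S_Nf=\sum_{n=1}^{N}f(\cdot+n\alpha)$, the shifted Birkhoff sums obey $|(S_Nf)(\{m\alpha\})|\le 2C$ for all $m,N$. For fixed $N$ the function $(S_Nf)^{2}$ is a bounded step function, hence Riemann integrable, so equidistribution of $\{m\alpha\}$ yields $\|S_Nf\|_{2}^{2}=\lim_{M\to\infty}\tfrac1M\sum_{m\le M}(S_Nf)^{2}(\{m\alpha\})\le 4C^{2}$. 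Expanding in Fourier series, $\|S_Nf\|_{2}^{2}=\sum_{m\ne0}|\widehat f(m)|^{2}|e^{2\pi imN\alpha}-1|^{2}/|e^{2\pi im\alpha}-1|^{2}$; averaging over $N\le M$, using Weyl's theorem (so the Ces\`aro average of $|e^{2\pi imN\alpha}-1|^{2}$ tends to $2$ for each $m\ne0$) and Fatou's lemma, one obtains $\sum_{m\ne0}|\widehat f(m)|^{2}/|e^{2\pi im\alpha}-1|^{2}<\infty$. Restricting this series to $m=q_j$, the convergent denominators of $\alpha$, and using $\sin^{2}(\pi q_j\alpha)<\pi^{2}/q_{j+1}^{2}$, this forces $(q_{j+1}/q_j)\,\|q_j\beta\|\to 0$.

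The remaining step---which I expect to be the real obstacle---is purely Diophantine: to deduce from $(q_{j+1}/q_j)\|q_j\beta\|\to0$ that $\beta\in\ZZ\alpha+\ZZ$ (whence, since $0<\beta<1$, $\beta=\{k\alpha\}$ for some $k\in\ZZ\setminus\{0\}$, which is \eqref{eq:kesten-condition}). Here I would pass to the Ostrowski expansion of $\beta$ relative to $\alpha$, $\beta\equiv\sum_{j\ge0}b_{j+1}(q_j\alpha-p_j)\pmod 1$ with digits $0\le b_{j+1}\le a_{j+1}$, and use that $\|q_j\beta\|$ is, up to bounded factors, comparable to $b_{j+1}/a_{j+1}$ together with lower-order contributions from the rest of the expansion, so that $(q_{j+1}/q_j)\|q_j\beta\|$ is essentially $b_{j+1}$; its vanishing in the limit then forces $b_{j+1}=0$ for all large $j$, i.e.\ the expansion terminates, which says exactly that $\beta$ is an integer combination of $1$ and $\alpha$. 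Pinning down this comparability between $\|q_j\beta\|$ and the Ostrowski digits, with the attendant three-distance bookkeeping, is where the main effort goes; a minor, ever-present nuisance is the value of $\mathbf 1_I$ at the endpoints of $I$, which only perturbs $\Delta$ by $O(1)$ and never affects boundedness.
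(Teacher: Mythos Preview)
The paper does not prove Proposition~\ref{prop:kesten}; it records the history (sufficiency by Hecke and Ostrowski, necessity by Kesten) and then, separately, proves the explicit formula of Proposition~\ref{prop:ostrowski}, from which sufficiency of \eqref{eq:kesten-condition} is immediate. Your coboundary argument for the ``if'' direction is precisely that telescoping computation in dynamical dress: your transfer function $g(x)=-\sum_{j=1}^{k}((x-s-j\alpha))$ is, up to an additive constant, the right-hand side of \eqref{eq:ostrowski}. So on sufficiency you and the paper coincide.

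For the ``only if'' direction you go well beyond the paper, which simply cites \cite{kesten}. Your $L^{2}$/Fourier reduction is correct and standard: bounded Birkhoff sums along the orbit force $\sum_{m\ne 0}|\widehat f(m)|^{2}/|e^{2\pi im\alpha}-1|^{2}<\infty$, and restricting to $m=q_{j}$ yields $(q_{j+1}/q_{j})\|q_{j}\beta\|\to 0$. The genuine gap is the final step, and it is sharper than you indicate. Your heuristic that $\|q_{j}\beta\|$ is comparable, up to bounded factors, to $b_{j+1}/a_{j+1}$ is not accurate enough: writing $\beta\equiv\sum_{i}b_{i}\theta_{i-1}$ with $\theta_{i}=q_{i}\alpha-p_{i}$ and using $q_{j}\theta_{i-1}\equiv q_{i-1}\theta_{j}\pmod 1$, one finds that the contributions to $q_{j}\beta\bmod 1$ from the digits $b_{i}$ with $i\ne j+1$ are themselves of order $1/a_{j+1}$. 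Thus at best $(q_{j+1}/q_{j})\|q_{j}\beta\|=b_{j+1}+O(1)$, and its vanishing only gives $b_{j+1}$ eventually \emph{bounded}, not eventually zero. Extracting $\beta\in\ZZ\alpha+\ZZ$ from the full summability condition (not merely its restriction to $m=q_{j}$) is exactly where the real difficulty of Kesten's theorem lives---necessity remained open for nearly four decades after sufficiency---and requires substantially more than the Ostrowski-digit bookkeeping you outline.
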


This result has an interesting history going back nearly a century. 
The sufficiency of condition \eqref{eq:kesten-condition}  
was established by Hecke \cite{hecke1922} in 1922 for the special case $s=0$
and by Ostrowski \cite{ostrowski1927} in 1927 for general $s$.
The necessity of the condition had been conjectured by
Erd\H os and Sz\" usz \cite{erdos} and was proved by Kesten \cite{kesten} in 1966.

For the case when condition \eqref{eq:kesten-condition} is satisfied, we
have the following  more precise result that gives  an explicit formula
for the interval discrepancy. This result is implicit in Ostrowski's paper
\cite{ostrowski1930} (see formulas (6) and (6') in 
\cite{ostrowski1930}), but since the original paper is not easily accessible, 
we will provide a proof here. 

\begin{prop}[Explicit Formula for Interval Discrepancy 
\cite{ostrowski1930}]
\label{prop:ostrowski}
Let $\alpha$ be irrational, $k\in\ZZ\setminus\{0\}$, and 
$0\le s\le 1-\fr{k\alpha}$. Then we have, for any $N\in\NN$, 
\begin{align}
\label{eq:ostrowski}
\Delta(N,\alpha,[s,s+\{k\alpha\})) &
=
\begin{cases}
\displaystyle
-\sum_{h=0}^{k-1}\Bigl(\fr{N\alpha-h\alpha-s} - \fr{-h\alpha-s}\Bigr)
&\text{if $k>0$,}
\\
\displaystyle
\sum_{h=1}^{|k|}\Bigl(\fr{N\alpha+ h\alpha-s} - \fr{h\alpha-s}\Bigr)
&\text{if $k<0$.}
\end{cases}
\end{align}
\end{prop}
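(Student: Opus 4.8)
The plan is to prove the formula by establishing it first for a base case and then propagating it through a telescoping/difference argument in $N$. Concretely, write $\Delta(N) = \Delta(N,\alpha,[s,s+\{k\alpha\}))$ and $\chi(x)$ for the indicator of the event $\{x\} \in [s, s+\{k\alpha\})$ (as a function on $\RR$, periodic mod $1$). Then $\Delta(N) - \Delta(N-1) = \chi(N\alpha) - \{k\alpha\}$, so the whole statement reduces to finding a function $g$ with $g(N) - g(N-1) = \chi(N\alpha) - \{k\alpha\}$ and $g(0) = 0$; uniqueness of such a $g$ is immediate, so it suffices to verify these two properties for the claimed right-hand side. The candidate is $g(N) = -\sum_{h=0}^{k-1}(\fr{N\alpha - h\alpha - s} - \fr{-h\alpha - s})$ when $k>0$ (and the analogous sum when $k<0$); note $g(0)=0$ is built in.

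First I would treat the case $k>0$. The key identity is that for a real number $\theta$ and a point $x$, the increment $\fr{x+\theta} - \fr{x}$ equals $\theta - \lfloor x+\theta\rfloor + \lfloor x\rfloor$, which is $\theta$ minus the indicator that $\{x\}$ lands in the "wrap-around" interval $[1-\{\theta\}, 1)$ (when $0<\theta<1$; one must be slightly careful about endpoints and about $\theta$ not in $[0,1)$, but $\{k\alpha\}$ is genuinely in $(0,1)$ since $\alpha$ is irrational). Apply this telescopically: $g(N)-g(N-1) = -\sum_{h=0}^{k-1}\bigl(\fr{N\alpha - h\alpha - s} - \fr{(N-1)\alpha - h\alpha - s}\bigr)$, and the inner sum telescopes across the shift $h \mapsto h-1$ in the first argument, collapsing to $\fr{N\alpha - s} - \fr{N\alpha - k\alpha - s}$. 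So $g(N)-g(N-1) = -(\fr{N\alpha - s} - \fr{N\alpha - k\alpha - s})$. Now I invoke the increment identity with $\theta = -k\alpha$ rewritten appropriately, or more cleanly observe directly that $\fr{N\alpha - s} - \fr{N\alpha - s - k\alpha}$ equals $\{k\alpha\} - \mathbf{1}[\,\{N\alpha - s\} \in [0,\{k\alpha\})\,] \cdot(\text{sign adjustment})$ — the bookkeeping here is exactly that $\{N\alpha-s-k\alpha\}$ differs from $\{N\alpha - s\} - \{k\alpha\}$ by $1$ precisely when $\{N\alpha-s\} < \{k\alpha\}$, i.e. when $\{N\alpha\} \in [s, s+\{k\alpha\})$, which is $\chi(N\alpha)$. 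This yields $g(N)-g(N-1) = \chi(N\alpha) - \{k\alpha\} = \Delta(N)-\Delta(N-1)$, completing the induction. The hypothesis $0 \le s \le 1-\{k\alpha\}$ guarantees the interval $[s,s+\{k\alpha\})$ sits inside $[0,1]$ without wrapping, so the indicator computation is the clean one with no split into cases.

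For $k<0$, the argument is symmetric: write $k = -|k|$ and run the same telescoping with the sum $\sum_{h=1}^{|k|}(\fr{N\alpha + h\alpha - s} - \fr{h\alpha - s})$; the increment in $N$ telescopes to $\fr{N\alpha + |k|\alpha - s} - \fr{N\alpha - s}$ (note the opposite sign in front), and the same fractional-part bookkeeping identifies this with $\{k\alpha\} - \chi(N\alpha)$ up to the overall sign, again using $s + \{k\alpha\} \le 1$. I expect the main obstacle to be purely notational: getting the boundary/endpoint conventions consistent (half-open interval $[s,s+\{k\alpha\})$, the definition of $D(\cdot)$-type events, the fact that $\{N\alpha\}$ and $\{N\alpha - s\}$ etc.\ never hit exact endpoints because $\alpha, s$-combinations are irrational in the relevant cases — or handling the measure-zero exceptions harmlessly), and making sure the two claimed sums really do telescope in the stated direction. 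Once the increment identity $\fr{x} - \fr{x - \{k\alpha\}} = \{k\alpha\} - \mathbf{1}[\{x\} \in [0,\{k\alpha\})]$ is isolated and proved, the rest is a short induction on $N$.
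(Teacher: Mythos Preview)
Your proof is correct and rests on the same two ingredients as the paper's: the fractional-part identity linking $\{x-t\}-\{x-s\}$ to the indicator $\mathbf{1}[s\le\{x\}<t]$, and the telescoping induced by $t-s=\{k\alpha\}$. The paper organizes the calculation slightly differently: rather than inducting on $N$, it sums the identity \eqref{eq:fractional-part-identity} over $n=1,\dots,N$ to obtain $\Delta(N,\alpha,[s,t))=\sum_{n=1}^N\bigl(\{(n-k)\alpha-s\}-\{n\alpha-s\}\bigr)$ directly, and then telescopes in $n$ so that only the $|k|$ boundary terms survive, yielding \eqref{eq:ostrowski}. Your induction on $N$ is this same telescoping viewed one increment at a time (your inner collapse over $h$ is the per-step shadow of the paper's collapse over $n$); the paper's version has the mild expository advantage of \emph{deriving} the formula rather than verifying a candidate already in hand.
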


\begin{proof}
Let $\alpha$, $k$, and $s$ be given as in the proposition.
We start with the elementary identity
\begin{equation}
\label{eq:fractional-part-identity}
\fr{x-t}-\fr{x-s}=
\begin{cases} 1-(t-s) & \text{if $s\le \{x\}<t$,}
\\
-(t-s) & \text{otherwise.}
\end{cases}
\end{equation}
which holds for any real numbers $x$ and $t$ with $0\le s<t\le 1$.
Setting $x=\{n\alpha\}$ in \eqref{eq:fractional-part-identity}
and summing over  $n\le N$, we obtain
\begin{align*}
\sum_{n=1}^N\Bigl(\fr{\fr{n\alpha}-t}-\fr{\fr{n\alpha}-s}\Bigr)
&=\#\{n\le N: \{n\alpha\}\in [s,t)\}- N(t-s)
\\
&=\Delta(N,\alpha,[s,t)).
\end{align*}
Specializing $t$ to $t=s+\{k\alpha\}$, the latter sum turns into 
a telescoping sum in which all except the first and last $|k|$ terms
cancel out.  More precisely, if $k>0$, then
\begin{align*}
\Delta(N,\alpha,[s,s+\{k\alpha\}))
&=\sum_{n=1}^N\Bigl(\fr{\fr{n\alpha}-s-\{k\alpha\}}-\fr{\fr{n\alpha}-s}\Bigr)
\\
&=\sum_{n=1}^N\Bigl(\fr{(n-k)\alpha-s}-\fr{n\alpha-s}\Bigr)
\\
&=-\sum_{h=0}^{k-1}\Bigl(\fr{N\alpha-h\alpha-s}
-\{-h\alpha-s\}\Bigr),
\end{align*}
which proves the first case of \eqref{eq:ostrowski}.
The second case follows by an analogous argument. 
\end{proof}

\section{Perfect Hits and Bounded Errors: The General Case.}
\label{sec:bounded-error}

With the theorems of Kesten and Ostrowski at our disposal, we are
finally in a position to settle Questions \ref{questionPH} and
\ref{questionBE}  and provide a partial
answer to Question \ref{questionDE}.  Our main result is the following theorem, which
gives a complete description of all (nontrivial) geometric sequences
$\{a^n\}$ and digits $d\in\{1,2,\dots,9\}$ for which the Benford
prediction has bounded error or represents a ``perfect hit''
in the sense of Definition \ref{def:perfect-hits}.

\begin{thm}[Perfect Hits and Bounded Benford Errors]
\label{thm:bounded-error}
Let $a$ be a positive real number satisfying \eqref{eq:a-condition}, and
let $d\in\{1,2,\dots,9\}$.
\begin{enumerate}
\item[(i)] \textbf{Characterization of bounded Benford errors.}
The Benford prediction for the leading digit $d$ in $\{a^n\}$ 
has \textbf{bounded error} if and only if 
\begin{equation}
\label{eq:bounded-error-condition}
a^k=\frac{d+1}d 10^m \quad \text{for some $k\in\ZZ\setminus\{0\}$ and $m\in\ZZ$.}
\end{equation}
Moreover, if this condition is satisfied, then the limit distribution of
the Benford error $E_d(N,\{a^n\})$ (in the sense of
\eqref{eq:limit-distribution}) exists and is a weighted average of
$|k|$ uniform distributions.

\item[(ii)] \textbf{Characterizations of perfect hits.}
The Benford prediction for the leading digit $d$ in $\{a^n\}$ is
\begin{itemize}
\item[$\bullet$] 
a \textbf{lower perfect hit} 
(i.e., satisfies $S_d(N,\{a^n\})=\fl{B_d(N)}$ for all $N\in\NN$)
if and only if  
\begin{equation}
\label{eq:perfect-hit1-condition}
d=1 \ \text{and}\ a=2\cdot 10^m \quad \text{for some $m\in\ZZ$;}
\end{equation}
\item[$\bullet$] 
an \textbf{upper perfect hit} 
(i.e., satisfies $S_d(N,\{a^n\})=\cl{B_d(N)}$ for all $N\in\NN$)
if and only if  
\begin{equation}
\label{eq:perfect-hit2-condition}
d=9 \ \text{and}\ a=9\cdot 10^m \quad \text{for some $m\in\ZZ$.}
\end{equation}
\end{itemize}
Moreover, if one of the conditions \eqref{eq:perfect-hit1-condition} and
\eqref{eq:perfect-hit2-condition} holds, then the limit distribution of
the Benford error $E_d(N,\{a^n\})$ exists and is uniform on $[-1,0]$ in
the first case, and uniform on $[0,1]$ in the second case.
\end{enumerate}
\end{thm}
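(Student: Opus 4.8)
The plan is to transport everything to the language of interval discrepancies via Lemma~\ref{lem:benford-interval-discrepancy} and then apply the theorems of Kesten and Ostrowski. Write $\alpha=\lgt a$ (irrational by~\eqref{eq:a-condition}) and $I_d=[\lgt d,\lgt(d+1))$, so $E_d(N,\{a^n\})=\Delta(N,\alpha,I_d)$ with $|I_d|=\lgt\frac{d+1}{d}\in(0,1)$. For Part~(i), the characterization of bounded error is then immediate: by Proposition~\ref{prop:kesten}, $\Delta(N,\alpha,I_d)$ is bounded iff $\lgt\frac{d+1}{d}=\fr{k\alpha}$ for some $k\in\ZZ\setminus\{0\}$, and since $\lgt\frac{d+1}{d}\in(0,1)$ this says exactly $k\lgt a-\lgt\frac{d+1}{d}\in\ZZ$, which after exponentiating base $10$ is condition~\eqref{eq:bounded-error-condition}; moreover this $k$ is unique, since two choices $k_1\ne k_2$ would give $a^{k_1-k_2}=10^{m_1-m_2}$ and hence $\lgt a\in\QQ$. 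When~\eqref{eq:bounded-error-condition} holds, the inequality $0\le\lgt d\le 1-\fr{k\alpha}=\lgt\frac{10d}{d+1}$ is valid precisely because $d\le9$ (here is where that bound enters), so Proposition~\ref{prop:ostrowski} applies with $s=\lgt d$ and expresses $E_d(N,\{a^n\})$ as $g(\fr{N\alpha})$ for a fixed $1$-periodic, right-continuous, piecewise-linear function $g$ of constant slope $\pm|k|$ with exactly $|k|$ jumps of size $1$. By Weyl's theorem $\fr{N\alpha}$ is equidistributed mod $1$, so the limit in~\eqref{eq:limit-distribution} exists and equals the pushforward of Lebesgue measure on $[0,1)$ under $g$; restricting to each of the $|k|$ maximal intervals on which $g$ is affine (each an affine bijection of absolute slope $|k|$) exhibits this pushforward as a convex combination of $|k|$ uniform distributions. (For $a=2$, $d=4$ one gets $k=-3$, recovering the three-fold superposition $f_4$ of Theorem~\ref{thm:digit1-4-distribution}.)

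For Part~(ii), a perfect hit of either type satisfies $|E_d(N,\{a^n\})|<1$ for all $N$, hence has bounded error, so Part~(i) applies: condition~\eqref{eq:bounded-error-condition} holds for a unique $k\ne0$ and $E_d(N,\{a^n\})=g(\fr{N\alpha})$ with $g$ as above. Since $\fr{N\alpha}$ is dense in $[0,1)$ and $g$ is piecewise linear, the closure of $\{E_d(N,\{a^n\}):N\in\NN\}$ equals the union of the $|k|$ closed intervals $\overline{g([q_h,q_{h+1}))}$, where $q_0<\dots<q_{|k|-1}$ are the jump points of $g$ in $[0,1)$ (and $q_{|k|}:=q_0+1$). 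A lower perfect hit forces this union into $[-1,0]$; since on consecutive pieces the relevant endpoints of these intervals are the attained value $g(q_h)$ and a one-sided limit differing from it by exactly $1$, this forces $g$ to take one common value at all $|k|$ jump points, and then the explicit Ostrowski formula forces the $q_h$ to be equally spaced with gap $1/|k|$. But the jump points of $g$ are the reductions mod $1$ of an arithmetic progression with common difference $\pm\alpha$, so if $|k|\ge2$ equal spacing would place $\alpha\bmod1$ in $\tfrac1{|k|}\ZZ/\ZZ$, contradicting the irrationality of $\alpha$. Hence $|k|=1$, and a direct one-term computation shows the common-value condition then holds only for $k=1$ together with $\lgt d\in\ZZ$, i.e., $d=1$, whence~\eqref{eq:bounded-error-condition} gives $a=2\cdot10^m$. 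The upper-perfect-hit case (closure forced into $[0,1]$) is symmetric and leaves only $k=-1$ together with $\lgt a\equiv\lgt d\pmod1$; combined with~\eqref{eq:bounded-error-condition}, this forces $d+1$ to be a power of $10$, i.e., $d=9$ and $a=9\cdot10^m$. Conversely, if $d=1$ and $a=2\cdot10^m$ then $\lgt a\equiv\lgt2\pmod1$, so (e.g.\ by Theorem~\ref{thm:digit1-4-errors}, or by Proposition~\ref{prop:ostrowski} with $k=1$, $s=0$) $E_1(N,\{a^n\})=-\fr{N\lgt2}\in(-1,0]$ — a lower perfect hit whose limit distribution is uniform on $[-1,0]$ by Weyl's theorem — and the case $d=9$, $a=9\cdot10^m$ is entirely analogous, giving $E_9(N,\{a^n\})=\fr{N\lgt9}\in[0,1)$ with limit distribution uniform on $[0,1]$.

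The crux is the step in Part~(ii) that converts the purely analytic hypothesis ``$E_d(N,\{a^n\})$ never leaves a half-open unit interval'' into the rigid arithmetic conclusion that the $|k|$ jump points of $g$ are equally spaced: this requires some care in matching the dense-but-incomplete orbit $\{\fr{N\alpha}\}$ against the one-sided limits of the discontinuous function $g$, in tracking the separate contributions of the $|k|$ affine pieces (including degenerate cases where a jump point sits at $0$), and in disposing of the base case $|k|=1$ by hand. By comparison, verifying the hypothesis $0\le\lgt d\le1-\fr{k\alpha}$ needed to legitimize Proposition~\ref{prop:ostrowski} — the one place where $d\le9$ is used — is routine.
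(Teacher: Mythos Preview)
Your proof is correct and follows essentially the same route as the paper's: reduce to interval discrepancy via Lemma~\ref{lem:benford-interval-discrepancy}, invoke Kesten (Proposition~\ref{prop:kesten}) for the boundedness characterization, Ostrowski (Proposition~\ref{prop:ostrowski}) for the explicit piecewise-linear formula, and Weyl's theorem for the limit distribution; the $|k|\ge2$ obstruction to a perfect hit is handled by the same idea (equal spacing of the jump points $\{h\alpha\}$ is forced but contradicts irrationality), with your ``common value at all jump points'' argument being the contrapositive of the paper's pigeonhole step. Your write-up even fills in two details the paper leaves implicit---uniqueness of $k$ and the verification of the hypothesis $\lgt d\le 1-\{k\alpha\}$ needed to apply Proposition~\ref{prop:ostrowski}.
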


Theorem \ref{thm:bounded-error} can be viewed as a far-reaching
generalization of the results (Theorems \ref{thm:digit1-4-errors} and
\ref{thm:digit1-4-distribution}) we had obtained above, using a much more
elementary approach, for the cases of leading digits $1$ and $4$ in the
sequence $\{2^n\}$.  In particular, the theorem shows that the 
``brick-shaped'' error distributions we had observed in these particular
cases are ``for real,'' and that error distributions of this type arise 
whenever the sequence $\{a^n\}$ and the digit $d$ 
satisfy the boundedness criterion \eqref{eq:bounded-error-condition} of
Theorem \ref{thm:bounded-error}. 

\subsection{Special cases and consequences.}
If $a$ is an integer $\ge2$ that is not divisible by $10$, then condition
\eqref{eq:bounded-error-condition} reduces to a simple Diophantine equation for
the number $a$ and the digit $d$. This equation has only finitely many
solutions, which can be found by considering the prime factorizations of
the numbers $a$, $d$, and $d+1$. Table \ref{table:bounded-errors} gives a
complete list of these solutions. 

%%%%%%%%%%%%%%%%%%%%%%%%%%%%%%%
% table 2 
% %%%%%%%%%%%%%%%%%%%%%%%%%%%%%%%
%
\begin{table}[H]
\caption{%
Complete 
list of digits $d$ and sequences $\{a^n\}$, where $a\ge2$ is an
integer not divisible by $10$, for which the Benford prediction has
bounded error. The two entries in \exact{boldface}, corresponding to 
the digit-sequence pairs 
$(1,\{2^n\})$ and $(9,\{9^n\})$,
denote cases where the Benford prediction is a perfect hit 
in the sense of Definition \ref{def:perfect-hits}. 
}
\label{table:bounded-errors} 
\begin{center}
\begin{tabular}{|c||l|}
\hline
Digit $d$ & Sequences $\{a^n\}$ with bounded Benford error  
\\
\hline
\hline
1 & 
 \mathexact{$\mathbf{\{2^n\}}$},
 ${\{5^n\}}$
\\
\hline
2 &
 ${\{15^n\}}$
\\
\hline
3 & 
 ${\{75^n\}}$
\\
\hline
4 & 
 $\{2^n\}$, $\{5^n\}$, 
 $\{8^n\}$, $\{125^n\}$
\\
\hline
5 & 
 ${\{12^n\}}$
\\
\hline
6 & 
\\
\hline
7 & 
 ${\{875^n\}}$
\\
\hline
8 &
 ${\{1125^n\}}$
\\
\hline
9 & 
$\{3^n\}$,  \exact{$\mathbf{\{9^n\}}$}
\\
\hline
\end{tabular}
\end{center}
\end{table}

Theorem \ref{thm:bounded-error}, in conjunction with Table
\ref{table:bounded-errors},
allows us to completely settle Questions \ref{questionPH} and
\ref{questionBE} on the true nature of the (potential) ``perfect
hits'' observed  in Table \ref{table:benford-1bil}: Of the 27 entries in
this table, only the digit-sequence pair $(1,\{2^n\})$ satisfies the
perfect hit criterion of Theorem \ref{thm:bounded-error} and thus
represents a true perfect hit (more precisely, a lower perfect hit).
An additional five entries---the pairs
$(1,\{5^n\})$, $(4,\{2^n\})$, $(4,\{5^n\})$, and $(9,\{3^n\})$---appear in
Table \ref{table:bounded-errors} and thus represent cases in which the
Benford error is bounded. On the other hand, none of the remaining 21
entries in Table \ref{table:benford-1bil} appears in Table
\ref{table:bounded-errors}, so in all of these cases the Benford error is
unbounded and the ``perfect hits'' observed in Table
\ref{table:benford-1bil} for some of these cases are mere coincidences.

\begin{proof}[Proof of Theorem \ref{thm:bounded-error}]
(i) Let $a$ and $d$ be given as in the theorem, and set 
\begin{equation}
\label{eq:def-alpha-s}
\alpha=\lgt a,\ s=\lgt d,\ t=\lgt (d+1).
\end{equation}
With these notations 
the condition \eqref{eq:bounded-error-condition}
can be restated as follows.
\begin{align}
%\tag*{\eqref{eq:bounded-error-condition}'}
\label{eq:bounded-error-condition-alt}
&t=s+\{k\alpha\} \quad \text{for some $k\in\ZZ\setminus\{0\}$.}
\end{align}
We begin by showing that
\eqref{eq:bounded-error-condition-alt}
holds if and
only if the Benford error is bounded.
By Lemma \ref{lem:benford-interval-discrepancy} we have
\begin{equation}
\label{eq:main-proof1}
E_d(N,\{a^n\})= \Delta(N,\alpha,[s,t)).
\end{equation}
By Kesten's theorem (Proposition~\ref{prop:kesten}) 
and since by assumption $\alpha$ is irrational,
it follows that $E_d(N,\{a^n\})$ is
bounded as a function of $N$ if and only if condition
\eqref{eq:bounded-error-condition-alt} holds. 
This proves the first assertion of part (i) of the theorem. 

Next, we assume that \eqref{eq:bounded-error-condition-alt} holds and 
consider the distribution of the Benford error in this case.  
Combining \eqref{eq:main-proof1} with Ostrowski's theorem 
(Proposition~\ref{prop:ostrowski}) 
gives the explicit formula
\begin{align}
\label{eq:main-proof2}
E_d(N,\{a^n\})
=
\begin{cases}
\displaystyle
-\sum_{h=0}^{k-1}\Bigl(\fr{N\alpha-h\alpha-s} - \fr{-h\alpha-s}\Bigr)
&\text{if $k>0$,}
\\
\displaystyle
\sum_{h=1}^{|k|}\Bigl(\fr{N\alpha+ h\alpha-s} - \fr{h\alpha-s}\Bigr)
&\text{if $k<0$,}
\end{cases}
\end{align}
with $s=\lgt d$ and $\alpha=\lgt a$ as in \eqref{eq:def-alpha-s}.
Since, by Weyl's theorem (see \eqref{eq:weyl}), the 
sequence $\{N\alpha\}$ is uniformly distributed modulo
$1$, it follows that the 
Benford errors $E_d(N,\{a^n\})$ have a distribution equal to that 
of the random variable 
\begin{equation}
\label{eq:main-proof3}
X_{k,\alpha,s}=
\begin{cases} \displaystyle
-\sum_{h=0}^{k-1}\Bigl(\fr{U-h\alpha-s} - \fr{-h\alpha-s}\Bigr)
&\text{if $k>0$,}
\\
\displaystyle
\sum_{h=1}^{|k|}\Bigl(\fr{U+ h\alpha-s} - \fr{h\alpha-s}\Bigr)
&\text{if $k<0$,}
\end{cases}
\end{equation}
where $U$ is uniformly distributed on $[0,1]$.
The latter distribution is clearly a superposition of $|k|$ uniform
distributions, thus proving the last assertion of part (i) of the
theorem.

\medskip

(ii) If \eqref{eq:perfect-hit1-condition} or
\eqref{eq:perfect-hit2-condition} holds,  
then the bounded error condition \eqref{eq:bounded-error-condition} is
satisfied with $k=1$ or $k=-1$, so the two expressions for 
the Benford error $E_d(N,\{a^n\})$ in \eqref{eq:main-proof2} 
reduce to a single term, and it is easily checked that this term is equal
to $-\{N\alpha\}$ in case \eqref{eq:perfect-hit1-condition} holds, 
and $\{N\alpha\}$ if \eqref{eq:perfect-hit2-condition} holds. 
In the first case, $E_d(N,\{a^n\})$ is contained in the interval $(-1,0)$ and 
thus satisfies the lower perfect hit criterion \eqref{eq:perfect-hit1-criterion},
while in the second case $E_d(N,\{a^n\})$ falls into the interval
$(0,1)$ and satisfies the upper perfect hit criterion
\eqref{eq:perfect-hit2-criterion}.  In either case, Weyl's theorem shows
that $E_d(N,\{a^n\})$ is uniformly distributed over the respective
interval.

For the converse direction, let $a$ and $d$ be given as in the theorem.
and suppose that the Benford prediction for digit $d$ and the sequence
$\{a^n\}$ is a (lower or upper) perfect hit.  In particular, this implies
that the Benford error, $E_d(N,\{a^n\})$, is bounded, so by part (i) 
$E_d(N,\{a^n\})$ has a limit distribution given by the random variable 
$X_{k,\alpha,s}$ defined in \eqref{eq:main-proof3}, where
$k$ is a a nonzero integer and $\alpha$ and $s$ are given by
\eqref{eq:def-alpha-s}.

Suppose first that $k=\pm1$. Then \eqref{eq:main-proof3} reduces
to
\begin{equation*}
X_{k,\alpha,s}=\begin{cases} 
-\fr{U-s}+\fr{-s}
&\text{if $k=1$,}
\\
\fr{U +\alpha-s} - \fr{\alpha-s}
&\text{if $k=-1$.}
\end{cases}
\end{equation*}
Thus, $X_{k,\alpha,s}$ has uniform distribution on the interval
$[-1+\theta,\theta]$, where  $\theta=\{-s\}$ if $k=1$, and
$\theta=1-\{\alpha-s\}$ if $k=-1$. Since we assumed that the Benford
prediction is a perfect hit, we must have either $\theta=0$ (for
a lower perfect hit) or $\theta=1$ (for an upper perfect hit). In the first
case, we have $k=1$ and $s=0$, and hence $d=1$, $t=\log_{10}2$, and
$\fr{\alpha}=t-s=\log_{10}2$. Therefore $a=10^\alpha=2\cdot 10^m$ for some
integer $m$, which is the desired condition \eqref{eq:perfect-hit1-condition} 
for a lower perfect hit.  A similar argument shows that in the case
$\theta=1$, the upper perfect hit condition, \eqref{eq:perfect-hit2-condition},
holds.

Now suppose $|k|\ge 2$.  To complete the proof of the necessity of the
conditions \eqref{eq:perfect-hit1-condition} and
\eqref{eq:perfect-hit2-condition}, it suffices to show that in this case
we cannot have a perfect hit.  We will do so by showing that the support of
the random variable $X_{k,\alpha,s}$ in \eqref{eq:main-proof3}
covers an interval of length greater than $1$ and thus, in particular,
cannot be equal to one of the intervals $[-1,0]$ and $[0,1]$ corresponding
to a perfect hit.

% To prove our claim on the support of $X_{k,\alpha,s}$, suppose $|k|\ge 2$.
If we set $U'=\fr{U-s}$ if $k>0$ and $U'=\fr{U+|k|\alpha-s}$ if $k<0$, 
then $U'$ is uniformly distributed on $[0,1]$, 
and \eqref{eq:main-proof3} can be written in the form 
\begin{equation} 
\label{eq:main-proof4}
X_{k,\alpha,s}=U'+\sum_{h=1}^{|k|-1} \{U'-\fr{h\alpha}\} + C, 
\end{equation} 
where $C=C(k,\alpha,s)$ is a constant. (Note that since $|k|\ge 2$, the sum
on the right of \eqref{eq:main-proof4} contains at least one term.)
Now let 
$0<\lambda_1<\dots < \lambda_{|k|-1}<1$ denote 
the numbers $\{h\alpha\}$, $h=1,\dots,|k|-1$, arranged in increasing
order, and set $\lambda_0=0$ and $\lambda_{|k|}=1$.  Then
\eqref{eq:main-proof4} yields 
\begin{equation*} 
%\label{eq:main-proof5}
X_{k,\alpha,s}=|k|U'  + C_i
\quad \text{if $\lambda_i\le U'<\lambda_{i+1}$}
\end{equation*}
for each $i\in\{0,1,\dots,|k|-1\}$,  
where $C_i=C_i(k,\alpha,s)$ is a constant. 
In particular, for each such $i$ the support of 
$X_{k,\alpha,s}$ covers an interval of length
$|k|(\lambda_{i+1}-\lambda_i)$. so we have
\begin{equation}
\label{eq:main-proof6}
\max X_{k,\alpha,s} - \min X_{k,\alpha,s}\ge 
|k|(\lambda_{i+1}-\lambda_i).
\end{equation}
By the pigeonhole principle, one of the intervals
$[\lambda_i,\lambda_{i+1})$, $i=0,\dots,|k|-1$, 
must have length $>1/|k|$
except in the case when $\lambda_i=i/|k|$ for $i=0,1,\dots,|k|$.
But this case  is impossible since the numbers $\lambda_i$  are a
permutation of numbers of the form $\{h\alpha\}$, $h=0,\dots,|k|-1$, 
and $\alpha$ is
irrational.  It follows that, for some $i\in\{0,\dots,|k|-1\}$, the
right-hand side of \eqref{eq:main-proof6} is strictly greater than $1$.
Hence $X_{k,\alpha,s}$ is supported on an interval of length greater than
$1$, and the proof of Theorem \ref{thm:bounded-error} is complete.
\end{proof}

%%%%%%%%%%%%%%%%%%%%%%%%%%%%%%%
% final
%%%%%%%%%%%%%%%%%%%%%%%%%%%%%%%

\section{The Final Frontier: The Case of Unbounded Errors.}
\label{sec:unbounded-error}

Having characterized the cases when the Benford error is bounded and
completely described the behavior of the Benford error for those cases, we
now turn to the final---and deepest---piece of the puzzle, the behavior of
the Benford error in cases where it is unbounded, i.e., when the
boundedness criterion \eqref{eq:bounded-error-condition} of Theorem
\ref{thm:bounded-error} is not satisfied.

\subsection{Exhibit B, Revisited.}
For the sequence $\{2^n\}$ the Benford error is unbounded exactly for
the digits $d=2,3,5,6,7,8,9$ (see Table \ref{table:bounded-errors}).
Remarkably, those are precisely the digits for which the distribution of the
Benford error in Figure \ref{fig:histograms} has the distinctive shape of
a normal distribution.  
Is this observed behavior for the sequence $\{2^n\}$ ``for real,'' in the
sense that the Benford error satisfies an appropriate central limit
theorem for these seven digits?
Is this behavior ``typical'' for cases of sequences $\{a^n\}$
and digits $d$ in which the Benford error is unbounded?  Could it be that
a central limit theorem holds in \emph{all}  cases in which the Benford
error is unbounded?  In other words, is it possible that the 
distribution of the Benford error for sequences $\{a^n\}$ is
either asymptotically normal, or a mixture of uniform distributions?

These are all natural questions suggested by numerical data, and it is 
not clear where the truth lies. Indeed, we do not \emph{know} the answer,
but we will provide heuristics \emph{suggesting} what the truth is and
formulate conjectures based on such heuristics.

\subsection{Interval Discrepancy, Revisited: The Limiting Distribution of
$\Delta(N,\alpha,I)$.}

In view of the connection between Benford errors and the interval discrepancy 
$\Delta(N,\alpha,I)$ (see Lemma \ref{lem:benford-interval-discrepancy}),
it is natural to consider analogous questions about the limiting
distribution of the interval discrepancy.  In particular, one can ask:

\begin{question}
Under what conditions on $\alpha$ and $I$ does  
the interval discrepancy $\Delta(N,\alpha,I)$ satisfy a central limit
theorem?
\end{question}

In contrast to the question about bounded interval discrepancy, which had
been completely answered more than 50 years ago by Ostrowski and Kesten
(see Propositions \ref{prop:kesten} and 
\ref{prop:ostrowski}),  the behavior of $\Delta(N,\alpha,I)$ 
in the case of unbounded interval discrepancy
turns out to be much deeper, and despite some
spectacular progress in recent years, a complete understanding remains
elusive.  

The recent progress on this question is largely due to J\'{o}zsef Beck,
who over the past three decades engaged in a systematic, and still
ongoing,  effort to attack questions of this type, for which Beck coined
the term ``probabilistic Diophantine approximation.'' 
Beck's work is groundbreaking and extraordinarily deep. The proofs
of the results cited below take up well over one hundred pages and draw on
methods from multiple fields, including algebraic and analytic number
theory, probability theory, Fourier analysis, and the theory of Markov chains.
Beck's recent book 
\cite{beck-book} provides a beautifully written, and exceptionally well
motivated, exposition of this work and the profound ideas that underlie
it.  We highly recommend this book to the reader interested in learning
more about this fascinating new field at the intersection of number theory
and probability theory.

Beck's main result on the behavior of $\Delta(N,\alpha,I)$ 
is the following theorem.  Detailed proofs can be found in his book
\cite{beck-book}, as well as in his earlier papers \cite{beck2010,beck2011}. 

\begin{prop}[Central Limit Theorem for Interval Discrepancy
(Beck {\cite[Theorem 1.1]{beck-book}})]
\label{prop:beck1}
Let $\alpha$ be a quadratic irrational and let $I=[0,s]$, where 
$s$ is a rational number in $[0,1]$.
Then $\Delta(N,\alpha, [0,s))$ satisfies the central limit theorem
\begin{align}
\notag
\lim_{N\to\infty}  
&\frac1N\#\left\{n\le N: 
u\le \frac{\Delta(N,\alpha,[0,s])-C_1\log N}{C_2\sqrt{\log N}} <  v\right\}
\\
&\qquad=\frac{1}{\sqrt{2\pi}}
\int_u^v e^{-x^2/2}\, dx
\quad\text{for all $u<v$,}
\label{eq:interval-discrepancy-CLT}
\end{align}
where $C_1=C_1(\alpha,s)$ and $C_2=C_2(\alpha,s)$ are constants depending on
$\alpha$ and $s$. 
\end{prop}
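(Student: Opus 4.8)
The plan is to reduce the interval discrepancy to a sum of $\asymp\log N$ bounded ``digit'' contributions attached to the continued fraction of $\alpha$, and then to run a central limit theorem for that sum. The first step is to invoke the classical exact formula --- essentially due to Ostrowski \cite{ostrowski1930} and Hardy--Littlewood \cite{hardy-littlewood1921}, and later recast using the \emph{Ostrowski expansion} of the endpoint --- that expands $\Delta(N,\alpha,[0,s))$ along the convergents $p_k/q_k$ of $\alpha=[a_0;a_1,a_2,\dots]$. Writing $N=\sum_{k\ge0}c_{k+1}q_k$ with $0\le c_{k+1}\le a_{k+1}$, and $s$ in a companion Ostrowski-type expansion with digits $b_k$, one obtains a representation
\[
\Delta(N,\alpha,[0,s)) \;=\; \sum_{k=0}^{K}\psi_k \;+\; O(1),
\]
where each $\psi_k$ depends only on finitely many neighboring digits $c_{k-1},c_k,c_{k+1},b_{k-1},b_k,b_{k+1}$ together with $\{q_{k-1}\alpha\}$, is bounded in terms of the partial quotients, and $K\asymp\log N$. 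So the discrepancy is, up to an $O(1)$ error, a sum of $\asymp\log N$ \emph{local} terms.

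The second step uses the two standing hypotheses. Because $\alpha$ is a \emph{quadratic irrational}, its continued fraction is eventually periodic and the $a_k$ are bounded; because $s$ is \emph{rational}, its companion expansion is eventually periodic as well. Hence the digit-tuple sequence $(c_k,b_k)$ of a uniformly random integer $N\le x$ (as $x\to\infty$) is, after a renewal/coupling argument, driven by an ergodic finite-state Markov chain, and $K=\kappa\log N+O(1)$, where $\kappa>0$ is a constant determined by the period of the continued fraction (essentially $1/\log\beta$ with $\beta>1$ the fundamental unit of $\mathbb{Q}(\alpha)$, up to normalization). The problem thus becomes a central limit theorem for an additive functional of an ergodic finite-state Markov chain observed over $K\asymp\log N$ steps.

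The third step is to apply a Markov-chain CLT --- via Gordin's martingale-approximation method, or directly from the exponential mixing of the chain --- to conclude that, after centering by $C_1\log N$ and scaling by $C_2\sqrt{\log N}$, the normalized discrepancy converges to a standard normal law, which is precisely \eqref{eq:interval-discrepancy-CLT}; here $C_1$ and $C_2^2$ are $\kappa$ times the stationary mean and the asymptotic (Green--Kubo) variance per step of the functional. One still has to verify non-degeneracy, $C_2>0$, i.e.\ that the functional is not a coboundary. This is at least consistent with Kesten's criterion: since $\alpha$ is irrational, $\{k\alpha\}$ is irrational for every nonzero integer $k$, so for rational $s$ condition \eqref{eq:kesten-condition} fails and $\Delta(N,\alpha,[0,s))$ is automatically unbounded --- a prerequisite for a genuine $\sqrt{\log N}$ fluctuation.

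The hard part will be carrying out the first and third steps \emph{with the right error control}, and it is here that Beck's deep work is indispensable. The Ostrowski formula is plagued by delicate carry and boundary effects; truncating the expansion of $N$ at length $K$ produces error terms that must be shown not to disturb the $\sqrt{\log N}$ scaling; and pinning down the constants $C_1,C_2$ and proving non-degeneracy draws on inputs from analytic number theory and Fourier analysis --- theta sums, exponential sums over arithmetic progressions, and fine equidistribution estimates --- that go well beyond the soft Markov-chain argument. Given the depth of these obstacles we do not attempt the details; a complete and beautifully motivated treatment is in Beck's book \cite{beck-book} and in his papers \cite{beck2010,beck2011}.
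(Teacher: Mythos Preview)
The paper does not prove this proposition at all: it is quoted verbatim as Beck's theorem, with the remark that ``detailed proofs can be found in his book \cite{beck-book}, as well as in his earlier papers \cite{beck2010,beck2011}.'' There is therefore no ``paper's own proof'' to compare against. Your proposal goes further than the paper by sketching the architecture of Beck's argument---the Ostrowski expansion of $N$ and of the endpoint, the reduction to $\asymp\log N$ bounded local terms, the periodicity of the continued fraction of a quadratic irrational and of the companion expansion of a rational $s$, and finally a Markov-chain CLT with non-degeneracy consistent with Kesten's criterion---before, like the paper, deferring to Beck for the actual execution. That outline is broadly faithful to Beck's strategy, and your closing caveat about carries, truncation errors, and the analytic-number-theory inputs is well placed: those are exactly the places where the hundred-plus pages of \cite{beck2010,beck2011,beck-book} are spent. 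In short, your proposal is not so much an alternative proof as an informed expansion of the paper's one-line citation, and it is appropriate in that role.
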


This result shows that, under appropriate conditions on $\alpha$ and $I$,
the interval discrepancy, $\Delta(N,\alpha,I)$, is approximately normally
distributed with mean and variance growing at a logarithmic rate.  This is
consistent with the behavior of the Benford error we had observed in
Figure \ref{fig:histograms} for the digits $2,3,5,6,7,8,9$.

Can Proposition \ref{prop:beck1} explain, and rigorously justify, these
observations?
Unfortunately, the assumptions on $\alpha$
and $I$ in the proposition are too restrictive to be applicable in
situations corresponding to Benford errors.  Indeed, by Lemma
\ref{lem:benford-interval-discrepancy},  the Benford error,
$E_d(N,\{a^n\})$, is equal to the interval discrepancy
$\Delta(N,\alpha,I_d)$ with $\alpha=\lgt a$ and $I_d=[\lgt d,\lgt (d+1))$.
However, Proposition \ref{prop:beck1} applies only to intervals with
\emph{rational endpoints} and thus does not cover intervals of the form
$I_d$. Moreover, in the cases of greatest interest such as the sequence
$\{2^n\}$, the number $\alpha=\lgt 2$ is not a quadratic irrational and
hence not covered by Proposition  \ref{prop:beck1}.

Of these two limitations to applying Proposition \ref{prop:beck1} to 
Benford errors, the restriction on the type of interval $I$ seems surmountable. 
Indeed, Beck \cite[p.~38]{beck2001} proved a 
central limit theorem similar  to \eqref{eq:interval-discrepancy-CLT} for
``random'' intervals $I$.  Hence, it is at least plausible that the result
remains valid for intervals of the type $I_d$ provided  $|I_d|$ is not of
the form $\{k\alpha\}$ for some $k\in\ZZ$, which, by Kesten's theorem
(Proposition~\ref{prop:kesten}), would imply bounded interval discrepancy.

\subsection{Beck's Heuristic.}
The restriction of $\alpha$ to
quadratic irrationals in  Proposition \ref{prop:beck1} is due to the fact
that quadratic irrationals have  a periodic continued fraction expansion,
which simplifies the argument.  Beck remarks that
this restriction can be significantly relaxed, and he provides a heuristic
for the class of numbers $\alpha$ for which a central limit theorem should
hold, which we now describe.

Consider the continued fraction expansion of $\alpha$:
\begin{equation}
\label{eq:continued-fraction}
\alpha= a_0+\dfrac{1}{a_1+\dfrac{1}{a_2 + \dfrac{1}{a_3 + \cdots }}}.
\end{equation}
Then, according to Beck's heuristic (see
\cite[p.~38]{beck2001}),
the interval discrepancy $\Delta(N,\alpha,I)$ behaves roughly like
\begin{equation}
\label{eq:beck-heuristic}
\Delta(N,\alpha,I)\approx \epsilon_1a_1+\epsilon_2a_2+\dots +
\epsilon_k a_k,
\end{equation} 
where the $\epsilon_i$ are independent random variables with values $\pm1$ 
and $k=k(N)$ is defined by $q_k\le N<q_{k+1}$, where 
$p_i/q_i$ denotes the $i$th partial quotient
in the continued fraction expansion of $\alpha$. 
By the standard central limit theorem in probability theory
(see, e.g., \cite[Section VIII.4]{feller-volume2}),
such a sum has an asymptotically normal distribution if 
it satisfies  
\begin{equation}
\label{eq:lindeberg}
\lim_{k\to\infty}\frac{a_k^2}{\sum_{i=1}^k a_i^2} = 0.
\end{equation}
Beck \cite[p.~247]{beck-book} concludes that a central limit theorem for
$\Delta(N,\alpha,I)$ can be expected to hold whenever $\alpha$ is an
irrational number whose continued fraction expansion satisfies
\eqref{eq:lindeberg}.  On the other hand, Beck \cite[p.~38]{beck2001} also
notes that in the above situation
\eqref{eq:lindeberg} is essentially necessary for
a central limit theorem to hold.

\subsection{Application to Benford Errors.}
Since, by Lemma
\ref{lem:benford-interval-discrepancy}, 
$E_d(N,\{a^n\})=\Delta(N,\alpha,I_d)$, where 
$\alpha=\lgt a$ and $I_d=[\lgt d,\lgt (d+1))$,
Beck's heuristic suggests the following conjecture. 

\begin{conj}[Central Limit Theorem for Benford Errors] 
\label{conj:benford-error-CLT}
Let $a>0$ be a real number satisfying \eqref{eq:a-condition}, and
suppose  
that the continued fraction expansion of  $\alpha=\lgt a$ satisfies  
\eqref{eq:lindeberg}. 
Then, for any digit $d\in\{1,2,\dots,9\}$ that does \emph{not} satisfy 
the ``bounded error'' condition \eqref{eq:bounded-error-condition} 
of Theorem \ref{thm:bounded-error}, the Benford error $E_d(N,\{a^n\})$ 
is asymptotically normally distributed in the sense that  
there exist sequences $\{A_N\}$ and $\{B_N\}$ such that
\begin{align}
\notag
\lim_{N\to\infty}  
&\frac1N\#\left\{n\le N: 
u\le \frac{E_d(N,\{a^n\})-A_N}{B_N} <  v\right\}
\\
&\qquad=\frac{1}{\sqrt{2\pi}}
\int_u^v e^{-x^2/2}\, dx
\quad\text{for all $u<v$.}
\label{eq:benford-error-CLT}
\end{align}
\end{conj}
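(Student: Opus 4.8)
\emph{Proof strategy.} The plan is to reduce the conjecture to a rigorous form of Beck's heuristic \eqref{eq:beck-heuristic} and then invoke the classical Lindeberg--Feller central limit theorem. By Lemma~\ref{lem:benford-interval-discrepancy} we have $E_d(N,\{a^n\})=\Delta(N,\alpha,I_d)$ with $\alpha=\lgt a$ and $I_d=[\lgt d,\lgt(d+1))$, so it suffices to prove a central limit theorem for the interval discrepancy $\Delta(N,\alpha,I)$ under the two standing hypotheses: the continued fraction expansion of $\alpha$ satisfies \eqref{eq:lindeberg}, and $|I|$ is not of the form $\fr{k\alpha}$ for any $k\in\ZZ$ (for $I=I_d$ the latter is exactly the statement that $d$ violates \eqref{eq:bounded-error-condition}, by Kesten's theorem, Proposition~\ref{prop:kesten}, which also guarantees that $\Delta(N,\alpha,I_d)$ is genuinely unbounded).

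First I would set up the Ostrowski numeration attached to the convergents $p_i/q_i$ of $\alpha$: every $N\in\NN$ has a representation $N=\sum_{i=0}^{k}b_iq_i$ with digits $b_i$ constrained by the partial quotients (up to the usual admissibility rules and an index shift) and with $k=k(N)$ determined by $q_k\le N<q_{k+1}$. Iterating the telescoping identity behind Proposition~\ref{prop:ostrowski} decomposes $\Delta(N,\alpha,I)$ into one contribution per Ostrowski digit; each such contribution is, up to a bounded error, of the shape $\pm b_i\,\psi_i(I)$ for an explicit oscillating factor $\psi_i(I)\in[-1,1]$ that records the location of the endpoints of $I$ within the $i$th level of the Ostrowski expansion. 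Grouping these contributions yields $\Delta(N,\alpha,I)=\sum_{i=1}^{k}\eta_i+O(1)$, where each summand satisfies $|\eta_i|=O(a_i)$ and has size $\asymp a_i$ in an appropriate average sense, matching \eqref{eq:beck-heuristic}.

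The analytic heart of the argument is to show that, as $n$ ranges over $\{1,\dots,N\}$, the family $\{\eta_i\}$ behaves for CLT purposes like a sum of \emph{independent} mean-zero summands of variance $\asymp a_i^2$. One does not prove genuine independence --- it is false --- but instead establishes a quantitative weak-dependence estimate, such as a strong-mixing bound or a martingale-difference approximation: conditioned on the low-index Ostrowski digits of a random $n$, the high-index digits are distributed close to their unconditional law, with the discrepancy decaying geometrically in the index gap. This is precisely where Beck's deep machinery is indispensable; for quadratic irrationals the eventual periodicity of the $a_i$ furnishes the self-similar renewal structure that makes such a bound tractable (this is what underlies Proposition~\ref{prop:beck1}), and the content of the conjecture is that \eqref{eq:lindeberg} is the correct substitute in the non-periodic case. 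Granting the mixing bound, one applies Lindeberg--Feller to the triangular array $\{\eta_i/B_k\}_{i\le k}$ with $B_k^2=\sum_{i\le k}\operatorname{Var}\eta_i\asymp\sum_{i\le k}a_i^2$; the Lindeberg condition for this array reduces exactly to \eqref{eq:lindeberg}, as already observed in the discussion of Beck's heuristic, and $B_N\asymp\sqrt{\log N}$ when the partial quotients are bounded on average. The centering sequence $A_N$ is the corresponding mean, which one shows grows at a logarithmic rate, consistent with the $C_1\log N$ term in \eqref{eq:interval-discrepancy-CLT}.

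The decisive obstacle is the mixing estimate for general $\alpha$ obeying \eqref{eq:lindeberg}. Absent periodicity of the continued fraction, the Ostrowski digits of a random $n$ evolve by a \emph{non-stationary} Markov-type dynamics whose transition behavior drifts with the index, and controlling the accumulated dependence demands uniform quantitative equidistribution of $\{n\alpha\}$ on scales finer than $1/q_k$ --- a demand that is acutely sensitive to the sizes of large partial quotients. A second, still genuine, difficulty is the irrationality of the endpoints $\lgt d$ and $\lgt(d+1)$ of $I_d$: the factors $\psi_i(I_d)$ involve the Ostrowski expansions of those endpoints, and one must rule out conspiratorial alignments between them and the digits $b_i$ that would either collapse the variance or correlate far-apart summands. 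Beck's central limit theorem for ``random'' intervals \cite[p.~38]{beck2001} makes this look manageable, but, like the rest of the program, a complete proof appears to lie beyond present technology --- which is why we state the result only as a conjecture.
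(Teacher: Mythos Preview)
The statement is a \emph{conjecture}, and the paper does not prove it: the authors explicitly present it as a plausible extrapolation from Beck's heuristic \eqref{eq:beck-heuristic} together with the classical Lindeberg--Feller criterion, noting that Beck's rigorous result (Proposition~\ref{prop:beck1}) covers only quadratic irrational $\alpha$ and intervals with rational endpoints, neither of which applies to $\alpha=\lgt a$ and $I_d=[\lgt d,\lgt(d+1))$. So there is no ``paper's own proof'' to compare against.

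Your proposal is therefore best read not as a proof but as a fleshed-out version of the heuristic the paper already gives, and in that capacity it is accurate and well aligned with the paper. You make the same reduction via Lemma~\ref{lem:benford-interval-discrepancy}, invoke the same Ostrowski-digit decomposition underlying \eqref{eq:beck-heuristic}, and identify the Lindeberg condition \eqref{eq:lindeberg} as exactly what is needed once one has (approximate) independence of the summands. You go somewhat further than the paper in naming the two genuine obstacles --- a quantitative mixing estimate for the Ostrowski digits when the continued fraction is non-periodic, and control of the endpoint factors $\psi_i(I_d)$ for the irrational endpoints $\lgt d$, $\lgt(d+1)$ --- and you correctly conclude, as the paper does implicitly, that closing these gaps is beyond current methods. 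In short: your write-up is a faithful and slightly more detailed restatement of the paper's heuristic rationale for the conjecture, and your final sentence acknowledging that no complete proof is available is the right conclusion.
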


This conjecture would explain the normal shape of the distributions  
of the Benford errors observed in Figure \ref{fig:histograms} 
\emph{if} the number $\alpha=\lgt 2$ has a continued fraction expansion
satisfying \eqref{eq:lindeberg}.  Unfortunately, we know virtually nothing
about the continued fraction expansion of $\lgt 2$ and thus are in no
position to determine whether or not $\lgt 2$ satisfies 
\eqref{eq:lindeberg}.  We are similarly ignorant 
about the nature of the continued fraction expansion of any 
number of the form
\begin{equation}
\label{eq:a-integer}
\alpha=\lgt a,\  a\in \NN,\  \lgt a\not\in\QQ.
\end{equation}
Thus Conjecture \ref{conj:benford-error-CLT} does not shed light on
the leading digit behavior of the simplest 
class of sequences $\{a^n\}$, namely those where $a$ a positive integer
that is not a power of $10$.

We can certainly construct numbers $a$ for which $\alpha=\lgt a$
satisfies \eqref{eq:lindeberg} (for example, 
numbers $a$ such that $\log_{10}a$ is a quadratic irrational), but those   
constructions are rather artificial, and they do not cover natural
families of numbers $a$ such as positive integers or rationals. 

If we are willing to believe that all numbers of the form 
\eqref{eq:a-integer} satisfy
\eqref{eq:lindeberg} \emph{and} assume the truth of Conjecture
\ref{conj:benford-error-CLT}, then  we would be able to conclude that 
the Benford error for sequences $\{a^n\}$ with $a$ as in
\eqref{eq:a-integer}
satisfies the dichotomy mentioned above: the error is either bounded with 
a limit distribution  that is a finite mixture of uniform
distributions, or unbounded with a normal limit distribution.
This would be a satisfactory conclusion to our original quest, but it
depends on a crucial assumption, namely that \eqref{eq:lindeberg} holds
for the numbers of the form \eqref{eq:a-integer}.  

How realistic is such an assumption?
Alas, it turns out that this assumption is not
at all realistic, in the sense that ``most'' real numbers $\alpha$ do
not satisfy \eqref{eq:lindeberg}.  Indeed, 
Beck \cite[p.~39]{beck2001} (see also \cite[p.~244]{beck-book})
showed that  
the Gauss--Kusmin theorem, a classical result
on the distribution of the terms $a_i=a_i(\alpha)$ in the continued
fraction \eqref{eq:continued-fraction} of a ``random'' real number, 
implies that the 
set of real numbers $\alpha>0$ for which \eqref{eq:lindeberg} holds has
Lebesgue measure $0$. Thus, the condition fails for a ``typical''
$\alpha$. Hence, as Beck observes, for a ``typical'' $\alpha$, the
interval discrepancy $\Delta(N,\alpha,I)$ does \emph{not} satisfy a
central limit theorem. 

Assuming the numbers $\lgt a$ in \eqref{eq:a-integer} behave like
``typical'' irrational numbers $\alpha$, we are thus led to the following
unexpected conjecture: 

\begin{conj}[Nonnormal Distribution of Benford Errors for Integer
Sequences $\{a^n\}$]
\label{conj:benford-error-non-CLT}
Let $a$ be any integer $\ge 2$ that is not a power of $10$, 
and let $d\in\{1,2,\dots,9\}$. 
Then the Benford error $E_d(N,\{a^n\})$ does \textbf{not} satisfy a 
central limit theorem in the sense of \eqref{eq:benford-error-CLT}.
\end{conj}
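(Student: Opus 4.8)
Since Conjecture~\ref{conj:benford-error-non-CLT} asserts the \emph{absence} of a central limit theorem, a proof must begin by pinpointing the obstruction. The plan is to stay entirely on the interval-discrepancy side. By Lemma~\ref{lem:benford-interval-discrepancy}, $E_d(N,\{a^n\})=\Delta(N,\alpha,I_d)$ with $\alpha=\lgt a$ and $I_d=[\lgt d,\lgt(d+1))$; and since $d$ is assumed \emph{not} to satisfy \eqref{eq:bounded-error-condition}, Kesten's theorem (Proposition~\ref{prop:kesten}) guarantees that $|I_d|=\lgt((d+1)/d)$ is not of the form $\{k\alpha\}$, so $\Delta(N,\alpha,I_d)$ is genuinely unbounded and we are in the regime governed by Beck's heuristic \eqref{eq:beck-heuristic}. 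The target then reduces to two tasks: (1) show that the continued-fraction digits $a_i=a_i(\lgt a)$ violate the Lindeberg condition \eqref{eq:lindeberg}, and (2) upgrade the ``essential necessity'' of \eqref{eq:lindeberg} noted by Beck \cite{beck2001} into a rigorous implication ruling out a CLT for $\Delta(N,\alpha,I_d)$.

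For task (1), the first step is to isolate the sole nonelementary ingredient: the behavior of the partial quotients $a_i(\lgt a)$ for an integer $a\ge2$ that is not a power of $10$. The expectation --- and, as the paragraph following Conjecture~\ref{conj:benford-error-CLT} makes explicit, the hypothesis one cannot presently prove --- is that $\lgt a$ behaves like a \emph{generic} irrational in the metric theory of continued fractions, so that its digit string obeys Gauss--Kuzmin statistics: $a_i\ge m$ with asymptotic frequency $\log_2(1+1/m)$, with enough (quasi-)independence for a Borel--Cantelli argument. One would then either state the conjecture conditionally on this, or extract only the weak consequence actually needed, namely that $a_k^2\ge\sum_{j<k}a_j^2$ for infinitely many $k$.

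Granting Gauss--Kuzmin-type behavior, the failure of \eqref{eq:lindeberg} is a short computation. That law places $a_i$ in the domain of attraction of a one-sided stable law of index $1$, not the normal law: $P(a_i\ge m)$ is of order $1/m$, so among the first $k$ digits the largest is of order $k$ and $\sum_{j<k}a_j^2$ is dominated by that single term. Hence the event $\{a_k\gtrsim k\}$ has probability of order $1/k$, the series $\sum_k 1/k$ diverges, and quasi-independence gives $a_k^2\ge\sum_{j<k}a_j^2$ infinitely often; on that subsequence the Lindeberg ratio $a_k^2/\sum_{j\le k}a_j^2$ is at least $1/2$, so the limit in \eqref{eq:lindeberg} is not $0$. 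Since $k=k(N)$ is determined by $q_k\le N<q_{k+1}$ and $\log q_k$ grows linearly in $k$ for a generic $\alpha$, these bad indices correspond to infinitely many scales $N$ --- indeed, one expects, to a set of $N$ of positive upper density --- which already precludes convergence of $(E_d(N,\{a^n\})-A_N)/B_N$ to a single Gaussian law.

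The main obstacle is twofold. The decisive one is task (1)'s arithmetic input: essentially nothing is known about the continued fraction expansion of $\lgt a$ for any integer $a\ge2$ --- even whether the partial quotients of $\lgt2$ are unbounded appears completely out of reach --- so this step can at present only be \emph{assumed}. The second, more technical, obstacle is task (2): Proposition~\ref{prop:beck1} is a CLT under a positive hypothesis for intervals $[0,s)$ with $s$ rational, whereas we need the converse direction for intervals $I_d$ with irrational endpoints. Here one would try to adapt Beck's decomposition of $\Delta(N,\alpha,I)$ along the Ostrowski expansion of $N$ --- the mechanism underlying Proposition~\ref{prop:ostrowski} --- to show that a single anomalously large partial quotient $a_k$ forces an atom of size comparable to the normalization $B_N$ into any subsequential limit law of the normalized discrepancy, which rules out asymptotic normality. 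Combining the conditional arithmetic hypothesis, the Lindeberg failure, and this adapted converse would then yield Conjecture~\ref{conj:benford-error-non-CLT}.
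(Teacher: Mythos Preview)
The statement you are addressing is a \emph{conjecture}, and the paper offers no proof of it --- only the heuristic chain you have reconstructed: Beck's model \eqref{eq:beck-heuristic}, the Lindeberg-type condition \eqref{eq:lindeberg} as the (heuristically) necessary and sufficient criterion for a CLT, Beck's observation via Gauss--Kuzmin that \eqref{eq:lindeberg} fails for Lebesgue-almost-every $\alpha$, and the leap of faith that $\lgt a$ is ``typical'' in this sense for integer $a$. Your outline follows exactly this route, and your identification of the two obstacles is accurate and matches the paper's own assessment: task~(1) is blocked because nothing whatsoever is known about the partial quotients of $\lgt a$ (the paper says this explicitly in the paragraph after Conjecture~\ref{conj:benford-error-CLT}), and task~(2) would require strengthening Beck's remark on the necessity of \eqref{eq:lindeberg} into a theorem covering the specific intervals $I_d$. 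Your Borel--Cantelli sketch for why Gauss--Kuzmin statistics force the Lindeberg ratio to stay bounded away from zero infinitely often is a correct expansion of what the paper cites from \cite{beck2001,beck-book}.

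One small point: the conjecture as stated covers \emph{all} digits $d\in\{1,\dots,9\}$, not only those violating \eqref{eq:bounded-error-condition}. For the bounded-error digits the non-CLT conclusion is already available unconditionally from Theorem~\ref{thm:bounded-error}(i), since the Benford error then has a genuine (nondegenerate, compactly supported) limit distribution that is a finite mixture of uniforms and hence not normal; you should dispose of that case separately before restricting attention to the unbounded regime.
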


This conjecture, which is based on sound heuristics and thus seems
highly  plausible, represents a stunning turn-around in our quest to unravel the
mysteries behind Figure \ref{fig:histograms}.  If true, the conjecture
would imply that in \emph{none} of the cases shown 
in Figure \ref{fig:histograms} is the distribution asymptotically normal.
In particular, the seven distributions in Figure
\ref{fig:histograms} that seemed close to a normal distribution
and which appeared to be the most likely candidates for 
a ``real'' phenomenon  are now being revealed as the (likely) ``fakes'':
The observed normal shapes are (likely) mirages and  manifestations of Guy's
``strong law of small numbers.''

In light of this conjecture, it is natural to ask why the distributions
observed in Figure \ref{fig:histograms} appeared to have a normal
shape.  We believe there are two phenomena at work. For one, the
number of ``relevant'' continued fraction terms $a_i$ in the
approximation \eqref{eq:beck-heuristic} of $\Delta(N,\alpha,I)$ can be
expected to be around $\log N$ for most $\alpha$. Thus, even for values
$N$ on the order of one billion, the number of terms in the
approximating sum of random variables on the right of
\eqref{eq:beck-heuristic} may be too small to reliably represent the
long-term behavior of these sums.  Furthermore, while, for a ``typical''
$\alpha$, the ratio $a_k^2/(a_1^2+\dots+a_k^2)$ appearing in
\eqref{eq:lindeberg} is bounded away from $0$ for infinitely many values
of $k$, these values of $k$ form a very sparse set of integers,  
while for ``most'' $k$, the above ratio remains small.  This would suggest
that, even for numbers $\alpha$ that do not satisfy \eqref{eq:lindeberg},
$\Delta(N,\alpha,I)$ can be expected to be approximately normal ``most of
the time.''

\section{Concluding Remarks.}
\label{sec:concluding-remarks}

While our original goal of getting to the bottom of the numerical
mysteries in Table \ref{table:benford-1bil} and Figure
\ref{fig:histograms} and understanding the underlying general phenomenon
has been largely accomplished, the story does not end here. 
The results and conjectures obtained suggest a variety of generalizations,
extensions, and related questions.  

One can consider other notions of a ``perfect hit,'' such as
situations where rounding the Benford prediction \emph{up or down} always
gives the exact count, without insisting on the same type of rounding as
we have done in our definition of a (lower or upper) perfect hit.  For
sequences of the form $\{a^n\}$, these cases can be completely
characterized using the methods of this article. For example, the
digit-sequence pair $(d,\{a^n\})=(1,\{5^n\})$, which was one of the
potential perfect hits in Table \ref{table:benford-1bil},  
turns out to be a true perfect hit in the above more relaxed sense, but not  
in the sense of our definition. 

One can consider leading digits with respect to more general bases than
base $10$.  The Benford distribution \eqref{eq:benford}
has an obvious generalization for leading  
digits with respect to an arbitrary integer base
$b\ge 3$: simply replace the probabilities $P(d)=\lgt(1+1/d)$,
$d=1,\dots,9$, in \eqref{eq:benford} by the probabilities 
$P_b(d)=\log_b(1+1/d)$, $d=1,\dots,b-1$.  We have focused here on the base
$10$ case for the sake of exposition, but we expect  that all of our
results and conjectures can be extended to more general bases $b$.

One can ask if similar results hold for more general classes of sequences
than the geometric sequences we have considered here. We expect this to be
the case for ``generic'' sequences defined by linear recurrences---including
the Fibonacci and Lucas sequences---because solutions to
such recurrences can be expressed as  linear combinations of geometric
sequences, and it seems reasonable to expect that the leading digit behavior
of such a linear combination is determined by that of the ``dominating''
geometric sequence involved.

One can seek to more directly tie the behavior of the Benford error to
that of the continued fraction expansion of $\alpha=\lgt a$.  For example,
the heuristic of Beck described in Section \ref{sec:unbounded-error}
(see \eqref{eq:beck-heuristic}) suggests that it might be possible to relate
the size and behavior of the Benford error $E_d(N,\{a^n\})$ over a
\emph{specific} range for the numbers $N$ to the size and behavior of the
continued fraction terms $a_k$ for a corresponding range of indices $k$.

Finally, one can investigate other measures of ``surprising'' accuracy
of the Benford prediction. A particularly interesting one is provided by
``record hits'' of the Benford prediction,  defined as cases where the
Benford error at index $N$ is smaller in absolute value than at any
previous index. 
Heuristic arguments,
as well as numerical experiments we have carried out, 
suggest that these indices $N$ are
closely tied to the denominators in the continued fraction expansion of
$\lgt a$.

\begin{acknowledgment}{Acknowledgments.}
We are grateful to the referees for their careful reading of the paper and 
helpful suggestions and comments.  This work originated with an undergraduate
research project carried out in 2016 at the \emph{Illinois Geometry Lab}
(IGL) at the University of Illinois; we thank the IGL for providing this
opportunity.  
\end{acknowledgment}

%%%%%%%%%%%%%%%%%%%%%%%%%%%%%%%%%%%%%%%%%%%%%%%%%%%%%%%%%%%%%%%%%%%%%%%%%
%%%%%%%%%%%%%%%%%%%%%%%%%%%%%%%%%%%%%%%%%%%%%%%%%%%%%%%%%%%%%%%%%%%%%%%%%
% bibliography

%%%%%%%%%%%%%%%%%%%%%%%%%%%%%%%%%%%%%%%%%%%%%%%%%%%%%%%%%%%%%%%%%%%%%%%%%
% addresses
%%%%%%%%%%%%%%%%%%%%%%%%%%%%%%%%%%%%%%%%%%%%%%%%%%%%%%%%%%%%%%%%%%%%%%%%%

\begin{biog}

\item[Zhaodong Cai]
received his B.S. in mathematics from the University of
Illinois in 2017 and is currently a Ph.D student at the University of
Pennsylvania. When not studying mathematics, he likes solving chess
puzzles.
\begin{affil}
Department of Mathematics,
University of Pennsylvania,
David Rittenhouse Lab,
209 South 33rd St.,
Philadelphia, PA 19104\\
zhcai@sas.upenn.edu
\end{affil}

\item[Matthew Faust]
is a Ph.D. student studying mathematics at Texas A\&M
University. He received B.S. degrees in computer engineering and
mathematics in 2018 from the University of Illinois. He is interested in
algebraic combinatorics and algebraic geometry. In his free time he enjoys
strategy games, preferably cooperative with Yuan Zhang.

\begin{affil}
Department of Mathematics,
Texas A\&M University, 
Mailstop 3368,
College Station, TX 77843\\
mfaust@math.tamu.edu
\end{affil}

\item[A.~J. Hildebrand] 
received his Ph.D. in mathematics
from the University of Freiburg in 1983 and has been 
at the University of Illinois since 1986, becoming professor emeritus in 2012.
Since retiring from the University of Illinois, he has supervised over one hundred
undergraduates on research projects in pure and applied mathematics and allied
areas. The present article grew out of one of these projects.

\begin{affil}
Department of Mathematics,
University of Illinois, 
1409 W. Green St., 
Urbana, IL 61801\\
ajh@illinois.edu
\end{affil}

\item[Junxian Li]
received her Ph.D. in 2018 from the University of Illinois 
under the supervision of Alexandru Zaharescu. Her
research interests are in number theory. During her Ph.D. studies, she has enjoyed
doing research with enthusiastic undergraduates as a graduate mentor in
the Illinois Geometry Lab. She is currently a postdoc at the Max Planck
Institute for Mathematics.
\begin{affil}
Max Planck Institute for Mathematics,
Vivatsgasse 7, D-53111 Bonn, Germany\\
jli135@mpim-bonn.mpg.de
\end{affil}

\item[Yuan Zhang] 
began his college education at the University of Illinois
as a major in natural resources and environment sciences. He soon  
changed his major to mathematics, receiving his B.S. degree in 2018. 
He is currently a Ph.D. student studying mathematics at the
University of Virginia.  He is interested in algebraic topology and
algebraic combinatorics. In his free time, he enjoys playing strategy
games, especially with his friend Matt Faust.
\begin{affil}
Department of Mathematics, 
University of Virginia, 
141 Cabell Dr.,
Kerchof Hall,
Charlottesville, VA 22904\\
yz3yq@virginia.edu
\end{affil}

\end{biog}

\end{document}